\documentclass[12pt]{article}
\usepackage{amssymb}
\usepackage{amsmath}
\usepackage{amsthm}
\usepackage{color}
\usepackage{comment}
\usepackage{geometry}		
\usepackage{graphicx}
\usepackage[sort&compress]{natbib}     

\makeatletter
	\@addtoreset{equation}{section}
\makeatother
\renewcommand{\labelenumi}{\roman{enumi})}

\DeclareFontFamily{OT1}{pzc}{}
\DeclareFontShape{OT1}{pzc}{m}{it}{<-> s * [1.10] pzcmi7t}{}
\DeclareMathAlphabet{\mathpzc}{OT1}{pzc}{m}{it}

\let\originalleft\left
\let\originalright\right
\renewcommand{\left}{\mathopen{}\mathclose\bgroup\originalleft}
\renewcommand{\right}{\aftergroup\egroup\originalright}

\begin{document}

\newcommand{\bO}{{\bf 0}}
\newcommand\cL{\mathcal{L}}
\newcommand\cO{\mathcal{O}}
\newcommand\cP{\mathcal{P}}
\newcommand{\rD}{{\rm D}}
\newcommand{\re}{{\rm e}}
\newcommand{\ri}{{\rm i}}
\newcommand{\ee}{\varepsilon}

\newcommand{\myStep}[2]{{\bf Step #1} --- #2\\}

\newtheorem{theorem}{Theorem}[section]
\newtheorem{corollary}[theorem]{Corollary}
\newtheorem{lemma}[theorem]{Lemma}
\newtheorem{proposition}[theorem]{Proposition}

\theoremstyle{definition}
\newtheorem{definition}{Definition}[section]
\newtheorem{example}[definition]{Example}

\theoremstyle{remark}
\newtheorem{remark}{Remark}[section]



\title{
Boundary Hopf bifurcations in three-dimensional Filippov systems.
}
\author{
D.J.W.~Simpson\\\\
School of Mathematical and Computational Sciences\\
Massey University\\
Palmerston North, 4410\\
New Zealand
}

\maketitle


\begin{abstract}

For piecewise-smooth ordinary differential equations, the occurrence of a Hopf bifurcation on a switching surface is known as a boundary Hopf bifurcation. Boundary Hopf bifurcations are codimension-two, so occur at points in two-parameter bifurcation diagrams. From any such point there issues a curve of grazing bifurcations, where the limit cycle born in the Hopf bifurcation hits the switching surface. For Filippov systems, these are usually grazing-sliding bifurcations whose local dynamics are dictated by piecewise-linear maps. In general, these maps have many independent parameters and extraordinarily rich dynamical behaviour. We show that for three-dimensional Filippov systems only a two-parameter family of piecewise-linear maps is relevant, because sliding motion induces a loss of dimension, and the stability of the limit cycle is degenerate at the Hopf bifurcation. We derive explicit formulas for the two parameters in terms of quantities associated with the boundary Hopf bifurcation, and perform a comprehensive numerical analysis to characterise the attractor of the family, which may be chaotic. The results are illustrated with a pedagogical example, a pest control model, and a model of a food chain with threshold-based harvesting. To evaluate the parameters, we use a formula for the linear term of the discontinuity map associated with grazing-sliding bifurcations. In this paper we present a new, simpler derivation of this formula for $n$-dimensional systems based on displacements from a virtual counterpart.

\end{abstract}

\section{Introduction}
\label{sec:intro}

To determine how a differential equation model
gives different predictions under different conditions,
we identify critical parameter values, {\em bifurcations}, where the
dynamical behaviour of the model changes in a fundamental way.
Codimension-one bifurcations, such as saddle-node, period-doubling,
and Hopf bifurcations, are realised by varying a single parameter.
Codimension-two bifurcations, such as Bautin, Takens-Bogdanov,
and homoclinic-Hopf bifurcations, require the variation of two parameters.
Codimension-two bifurcations occur at points in two-parameter bifurcation diagrams,
and for each type of codimension-two bifurcation it is useful to understand its {\em unfolding},
i.e.~the way in which curves of codimension-one bifurcations issue from the codimension-two point.
This is because codimension-two bifurcations are often organising centres
whose unfoldings dictate the dynamics of the system across wide areas of parameter space. 

This paper studies an unfolding for ordinary differential equations of the form
\begin{equation}
\dot{X} = \begin{cases}
F_L(X;\nu,\eta), & H(X;\nu,\eta) < 0, \\
F_R(X;\nu,\eta), & H(X;\nu,\eta) > 0,
\end{cases}
\label{eq:F}
\end{equation}
where $X \in \mathbb{R}^3$ is the state variable,
and $\nu, \eta \in \mathbb{R}$ are parameters.
Such piecewise-smooth equations
are apt models of mechanical systems with impacts or friction,
engineered devices with on/off control,
and other phenomena that switch between distinct modes of evolution \cite{DiBu08}.
In general, the vector fields $F_L$ and $F_R$ differ on the switching surface
\begin{equation}
\Sigma_{\nu,\eta} = \left\{ X \in \mathbb{R}^3 \,\middle|\, H(X;\nu,\eta) = 0 \right\},
\label{eq:Sigma}
\end{equation}
which we refer to as the {\em discontinuity surface}.
Subsets of $\Sigma$ over which $F_L$ and $F_R$
are both directed toward $\Sigma$ are attracting sliding regions
on which we assume orbits evolve according to Filippov's convention \cite{Fi88,Je18b}.
For relay control systems, such motion represents
the limit of infinitely fast switching \cite{JoRa99}.
For mechanical systems with stick-slip friction modelled by Coulomb's law,
sliding motion represents sticking \cite{BlCz99,FeGu98}.
In ecology and economics, sliding motion may represent the situation
that species or companies hesitate over a decision
between different courses of action \cite{DeGr07,PuSu06}.

This paper concerns the codimension-two scenario that one piece of the system
has a Hopf bifurcation on $\Sigma$.
This is termed a boundary Hopf bifurcation, and its unfolding always involves a curve of Hopf bifurcations,
a curve of grazing bifurcations,
and a curve of boundary equilibrium bifurcations,
where the system has an equilibrium on $\Sigma$ \cite{DeDe11,DiPa08,GuSe11,Si10,SiKo09}.
The grazing bifurcations are where the Hopf cycle (limit cycle created in the Hopf bifurcation)
collides with $\Sigma$.
Other bifurcation curves can emanate from the codimension-two point,
relating to other invariant sets created in the boundary equilibrium and grazing bifurcations \cite{SiMe08}.

If $F_L = F_R$ on $\Sigma$, then the limit cycle persists
without change in stability through the grazing bifurcation \cite{DiBu01}.
If $F_L \ne F_R$ on $\Sigma$, then in generic situations the local dynamics interacts with a sliding region.
If this sliding region is repelling,
the grazing bifurcation destroys the limit cycle
because orbits are ejected into a different part of phase space \cite{JeCh10,JeHo11}.
If the sliding region is attracting,
the grazing bifurcation is a {\em grazing-sliding bifurcation}
whose local dynamics are described by
a continuous and asymptotically piecewise-linear Poincar\'e map, Fig.~\ref{fig:schem3}.

\begin{figure}[t!]
\begin{center}
\includegraphics[width=10cm]{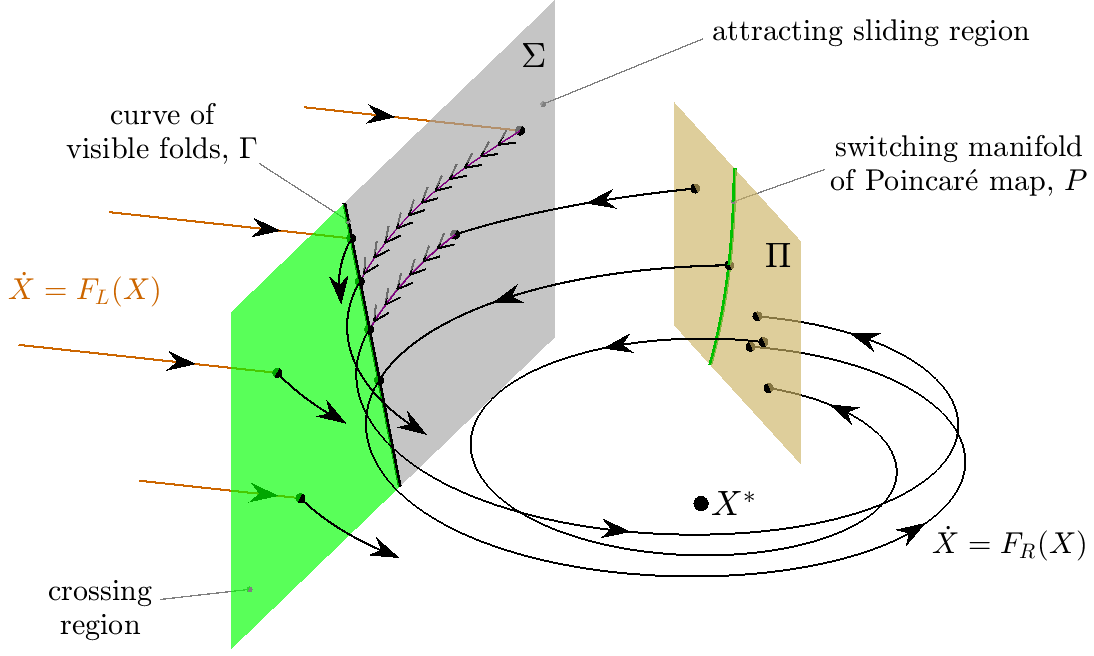}
\caption{
A phase portrait of a Filippov system \eqref{eq:F}
subject to assumptions listed in \S\ref{sec:thm}.
There is a single discontinuity surface $\Sigma$, defined by all points $X$ for which $H(X) = 0$.
To the left [right] of $\Sigma$ orbits evolve under $\dot{X} = F_L(X)$ [$\dot{X} = F_R(X)$].
The discontinuity surface is shaded green [turquoise] where it is a crossing region [attracting sliding region];
these regions are bounded by a curve $\Gamma$ of visible folds.
Sliding motion is indicated with multitudinous small arrows.
The surface $\Pi$ is used to define a Poincar\'e map $P$;
this map is piecewise-smooth with pieces $P_L$ (corresponding to orbits with a sliding segment)
and $P_R$ (corresponding to orbits that do not reach $\Sigma$).
\label{fig:schem3}
} 
\end{center}
\end{figure}

This paper analyses the grazing-sliding case for three-dimensional Filippov systems.
The basic unfolding of this scenario is shown in Fig.~\ref{fig:schemBifSet}.
The Poincar\'e map is two-dimensional,
and, by a result of Nusse and Yorke \cite{NuYo92},
in generic situations its leading-order terms can be converted via a change of variables to the form
\begin{equation}
\begin{bmatrix} x \\ y \end{bmatrix} \mapsto
\begin{cases}
\begin{bmatrix} \tau_L x + y + \mu \\ -\delta_L x \end{bmatrix}, & x \le 0, \\[4mm]
\begin{bmatrix} \tau_R x + y + \mu \\ -\delta_R x \end{bmatrix}, & x \ge 0.
\end{cases}
\label{eq:bcnf}
\end{equation}
The family \eqref{eq:bcnf} is the two-dimensional border-collision normal form.
It contains four parameters $\tau_L, \delta_L, \tau_R, \delta_R \in \mathbb{R}$,
in addition to the border-collision parameter $\mu$ whose value can be scaled to $-1$ or $1$
corresponding to the two sides of the bifurcation.
The border-collision normal form has remarkably rich dynamics
that yet to be fully catagorised \cite{AvSc12,BaGr99,FaSi23,GhMc24,Gl16e,SuGa06,ZhMo08}.

\begin{figure}[t!]
\begin{center}
\includegraphics[width=15.6cm]{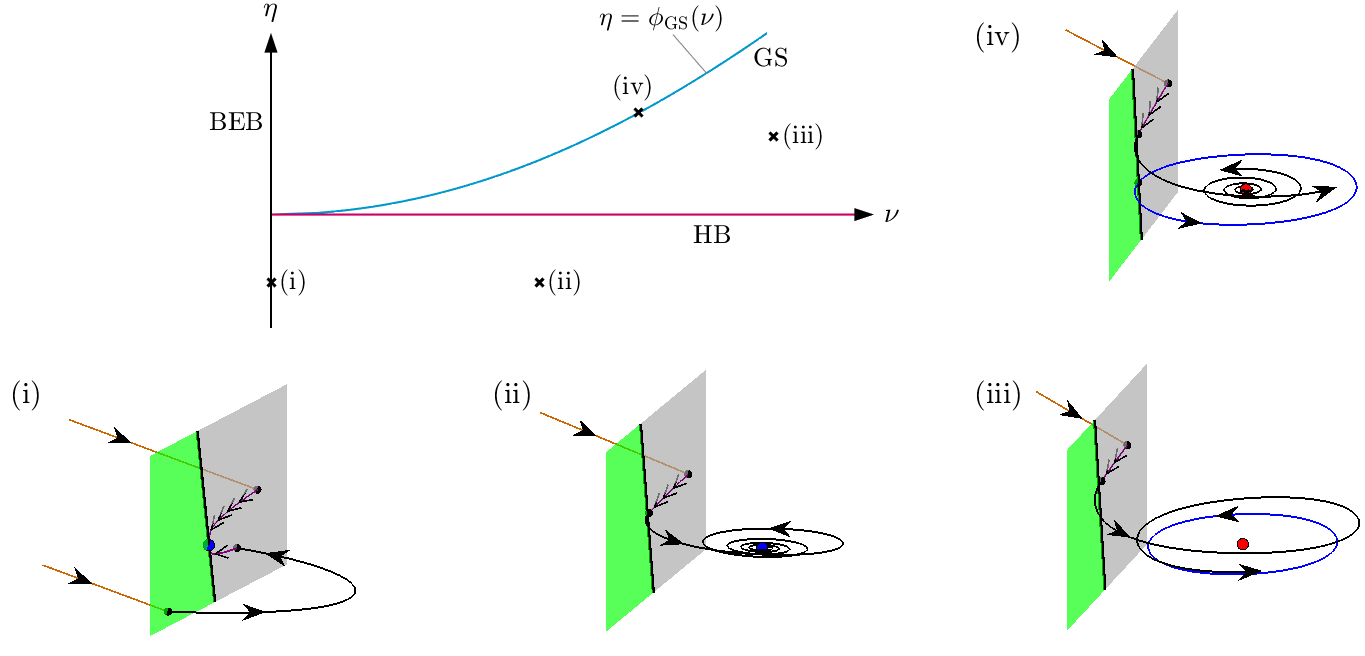}
\caption{
A sketch illustrating the basic unfolding of a boundary Hopf bifurcation of a
Filippov system of the form \eqref{eq:F}
(BEB: boundary equilibrium bifurcation;
HB: Hopf bifurcation;
GS: grazing-sliding bifurcation).
The first two bifurcation curves coincide with the $\eta$ and $\nu$ axes, as in Theorem \ref{th:main}.
The equilibrium $X^*(\nu,\eta)$ is a zero of the right vector field $F_R$,
the Hopf bifurcation is supercritical,
and the sliding region is attracting.
Four representative phase portraits are included:
(i) on the BEB curve $X^*(\nu,\eta)$ belongs to the discontinuity surface;
(ii) in the lower-right quadrant $X^*(\nu,\eta)$ is stable and admissible;
(iii) immediately above the HB curve $X^*(\nu,\eta)$ is unstable and the Hopf cycle does not intersect the discontinuity surface;
(iv) on the GS curve $\eta = \phi_{\rm GS}(\nu)$ the limit cycle grazes the discontinuity surface.
\label{fig:schemBifSet}
} 
\end{center}
\end{figure}

Each point on the grazing curve corresponds to the border-collision normal form
with some values of $\tau_L$, $\delta_L$, $\tau_R$, and $\delta_R$
that vary continuously along the curve.
For clarity, suppose the left piece of the normal form corresponds to orbits with sliding segments,
and the right piece of the normal form corresponds to orbits that do not reach $\Sigma$.
Then $\delta_L = 0$, because orbits with sliding regions are forced to pass through the one-dimensional set $\Gamma$, see Fig.~\ref{fig:schem3}.
Also, there is a restriction on the values of $\tau_R$ and $\delta_R$ as we limit to the codimension-two point.
This is because the Hopf cycle has a Floquet multiplier tending to $1$ at the Hopf bifurcation.
The Hopf cycle corresponds to a fixed point of the right piece of \eqref{eq:bcnf},
thus $\delta_R = \tau_R - 1$ at the codimension-two point.

The remainder of this paper is organised as follows.
In \S\ref{sec:thm} we present Theorem \ref{th:main}.
This theorem states the above claims precisely
and provides formulas for the limiting values of the parameters
of the border-collision normal form at the codimension-two point.
The theorem necessarily contains a large number of non-degeneracy conditions
for ensuring the boundary equilibrium, Hopf, and grazing-sliding bifurcations are generic.
From a bifurcation theory perspective,
it is necessary and instructive to have these conditions characterised and formulated quantitatively.

Theorem \ref{th:main} is proved in \S\ref{sec:proof}.
This is achieved by leveraging earlier work to obtain the existence, uniqueness, and smoothness
of the three codimension-one bifurcation curves.
For the limiting parameter values of the normal form,
$\delta_R$ is the non-degenerate Floquet multiplier of the Hopf cycle,
while $\tau_L$ is constructed from the Poincar\'e map
and the linear term of the discontinuity map associated with grazing-sliding bifurcations.
In Appendix \ref{app:formula} we provide a novel derivation of this term
for grazing-sliding bifurcations in $n$-dimensions.

In \S\ref{sec:bcnf} we perform a numerical bifurcation analysis
of the normal form \eqref{eq:bcnf} with $\delta_L = 0$, $\delta_R = \tau_R - 1$,
and $\mu = -1$, corresponding to the dynamics beyond the grazing-sliding bifurcation.
We find that if the map has an attractor,
the attractor is either a fixed point,
a period-two solution, or is a finite union of line segments on which orbits are chaotic.
For the last case we partition parameter space
into regions according to the geometric structure of the chaotic attractor.

In \S\ref{sec:appl} we show how the results apply to mathematical models.
We first consider a purely pedagogical model.
We then consider the three-species food chain model of Hastings and Powell \cite{HaPo91}
with two different types of threshold control.
In each case we use Theorem \ref{th:main} to identify the limiting instance of \eqref{eq:bcnf}
that applies to the given boundary Hopf bifurcation.
We then apply the results of \S\ref{sec:bcnf} to explain
the nature of the attractor created in the grazing bifurcation near the codimension-two point.
By further analysing the normal form along the grazing bifurcation,
we identify additional codimension-two points
that give rise to other bifurcation curves.
A summary and discussion in provided in \S\ref{sec:conc}.

\section{Statement of the main result}
\label{sec:thm}

In this section we describe the unfolding precisely.
We first explain the non-degeneracy conditions required
for the codimension-one bifurcations to occur in a generic fashion,
then state the unfolding as Theorem \ref{th:main}.

\subsection{Two-piece systems}

We consider systems of the form \eqref{eq:F}
for which the switching function $H$ and the vector fields $F_L$ and $F_R$ are $C^3$.
By the regular value theorem \cite{Hi76},
the discontinuity surface $\Sigma_{\nu,\eta}$
is a two-dimensional $C^3$ manifold in a neighbourhood of any point on the surface
for which the gradient vector $\nabla H$ is non-zero.

The dynamics associated with a boundary Hopf bifurcation are local.
This means the dynamics occurs in a neighbourhood of a point on the discontinuity surface.
Thus Theorem \ref{th:main} can be applied to systems with more than two smooth pieces
by simply ignoring the pieces of the system that are not associated with the bifurcation.

\subsection{Sliding motion}

As in di Bernardo {\em et al.}~\cite{DiBu08},
we use Lie derivatives to describe the flow in relation to the discontinuity surface.
The first Lie derivatives of $H$ with respect to $F_L$ and $F_R$ are the scalar products
\begin{equation}
\begin{split}
\cL_{F_L} H(X;\nu,\eta) &= \nabla H(X;\nu,\eta)^{\sf T} F_L(X;\nu,\eta), \\
\cL_{F_R} H(X;\nu,\eta) &= \nabla H(X;\nu,\eta)^{\sf T} F_R(X;\nu,\eta).
\end{split}
\end{equation}
These give the value of $\frac{d H}{d t}$
for orbits following $F_L$ and $F_R$ respectively.
Thus orbits of the left piece of \eqref{eq:F} reach $\Sigma_{\nu,\eta}$ transversally
at points with $\cL_{F_L} H(X;\nu,\eta) > 0$,
while orbits of the right piece of \eqref{eq:F} reach $\Sigma_{\nu,\eta}$ transversally
at points with $\cL_{F_R} H(X;\nu,\eta) < 0$.

Subsets of $\Sigma_{\nu,\eta}$ for which
$\cL_{F_L} H(X;\nu,\eta) \cL_{F_R} H(X;\nu,\eta) > 0$
are {\em crossing regions} where 
orbits of \eqref{eq:F} pass from one side of $\Sigma_{\nu,\eta}$ to the other.
Subsets of $\Sigma_{\nu,\eta}$ for which
$\cL_{F_L} H(X;\nu,\eta) > 0$ and $\cL_{F_R} H(X;\nu,\eta) < 0$ are {\em attracting sliding regions}
on which we assume orbits evolve under the sliding vector field
\begin{equation}
F_S(X;\nu,\eta) = \frac{F_R(X;\nu,\eta) \cL_{F_L} H(X;\nu,\eta) - F_L(X;\nu,\eta) \cL_{F_R} H(X;\nu,\eta)}
{\cL_{F_L} H(X;\nu,\eta) - \cL_{F_R} H(X;\nu,\eta)}.
\label{eq:FS}
\end{equation}
This vector field is the unique convex combination of $F_L$ and $F_R$ that is tangent to $\Sigma_{\nu,\eta}$,
and with this convention \eqref{eq:F} is a {\em Filippov system}.
Subsets of $\Sigma_{\nu,\eta}$ for which
$\cL_{F_L} H(X;\nu,\eta) < 0$ and $\cL_{F_R} H(X;\nu,\eta) > 0$ are {\em repelling sliding regions}.

\subsection{An equilibrium}

Suppose the right piece of \eqref{eq:F} has an equilibrium
$X^*(\nu,\eta)$ with associated eigenvalues $\alpha(\nu,\eta) \pm \ri \beta(\nu,\eta) \in \mathbb{C}$
and $\gamma(\nu,\eta) \in \mathbb{R}$.
The purpose of this paper is to unfold
the codimension-two scenario that $X^*(\nu,\eta) \in \Sigma_{\nu,\eta}$ and $\alpha(\nu,\eta) = 0$.

Without loss of generality, we assume this scenario occurs at $(\nu,\eta) = (0,0)$.
Let $X_0 = X^*(0,0)$, $\alpha_0 = \alpha(0,0)$, $\beta_0 = \beta(0,0)$, and $\gamma_0 = \gamma(0,0)$.
By assumption, $X_0 \in \Sigma_{0,0}$ and $\alpha_0 = 0$.
For genericity, we require $\beta_0 \ne 0$ and $\gamma_0 \ne 0$,
and without loss of generality can assume $\beta_0 > 0$.
That is, we assume the eigenvalues obey the genericity constraints
\begin{align}
\beta_0 &> 0, &
\gamma_0 &\ne 0.
\label{eq:aEigs}
\end{align}
So that Theorem \ref{th:main} can be stated plainly, we additionally assume
\begin{align}
X^*(0,\eta) &\in \Sigma_{0,\eta} \,, \qquad \text{for all $\eta$ in a neighbourhood of $0$}, \label{eq:aBEB} \\
\alpha(\nu,0) &= 0, \qquad \text{for all $\nu$ in a neighbourhood of $0$}. \label{eq:aHB}
\end{align}
With these conditions, and certain genericity constraints described below,
$\nu = 0$ is a line of boundary equilibrium bifurcations,
while $\eta = 0$ is a line of Hopf bifurcations, as in Fig.~\ref{fig:schemBifSet}.
For a general system with two parameters,
conditions \eqref{eq:aBEB} and \eqref{eq:aHB} can be imposed via a change of coordinates.
However, as shown in \S\ref{sec:appl},
Theorem \ref{th:main} can often be applied without performing a coordinate change.

Notice $H(X^*(0,0);0,0) = 0$ and $F_R(X^*(0,0);0,0) = \bO$ (the zero vector).
Let
\begin{equation}
\begin{split}
u &= \nabla H(X^*(0,0);0,0), \\
d &= F_L(X^*(0,0);0,0), \\
M_R &= \rD F_R(X^*(0,0);0,0).
\end{split}
\label{eq:udMR}
\end{equation}
The matrix $M_R$ is the Jacobian matrix associated with the equilibrium.
This matrix is invertible,
because by assumption it has eigenvalues $\pm \ri \beta_0 \ne 0$ and $\gamma_0 \ne 0$.
The vector $u$ is normal to $\Sigma_{0,0}$ at the equilibrium,
while the vector $d$ is the value of the left piece of \eqref{eq:F} at the equilibrium.
In Theorem \ref{th:main} we assume
\begin{equation}
u^{\sf T} d > 0,
\label{eq:avL}
\end{equation}
to ensure the existence of an attracting sliding region locally.
With instead $u^{\sf T} d < 0$, the sliding region is repelling
and grazing bifurcations lead to the absence of a local attractor.

\subsection{Boundary equilibrium bifurcations}

The point $X^*(\nu,\eta)$ is only an equilibrium of \eqref{eq:F} if $H(X^*(\nu,\eta);\nu,\eta) > 0$,
in which case we say it is {\em admissible}.
If instead $H(X^*(\nu,\eta);\nu,\eta) < 0$, we say $X^*(\nu,\eta)$ is {\em virtual}.

In order for the line $\nu = 0$ to correspond to generic boundary equilibrium bifurcations,
the equilibrium $X^*(0,0)$ must cross $\Sigma_{\nu,\eta}$ transversally
as the value of $\nu$ is varied through zero.
Without loss of generality, we assume $X^*(\nu,\eta)$ is admissible for $\nu > 0$, and virtual for $\nu < 0$.
In this case the transversality condition on the equilibrium is
\begin{equation}
\frac{\partial H(X^*(\nu,\eta);\nu,\eta)}{\partial \nu} \bigg|_{(0,0)} > 0.
\label{eq:anu}
\end{equation}

Boundary equilibrium bifurcations
also involve a {\em pseudo-equilibrium} (zero of the sliding vector field $F_S$) \cite{DiBu08}.
The pseudo-equilibrium is admissible if it belongs to a sliding region (attracting or repelling),
and virtual if it belongs to a crossing region.

In generic situations, the two equilibria are each admissible on exactly one side of the
boundary equilibrium bifurcation.
If they are admissible on different sides,
the bifurcation is referred to as {\em persistence},
while if they are admissible on the same side,
the bifurcation is referred to as a {\em nonsmooth-fold}.
By a result of di Bernardo {\em et al.}~\cite{DiNo08},
persistence occurs if $u^{\sf T} M_R^{-1} d < 0$,
and a nonsmooth-fold occurs if $u^{\sf T} M_R^{-1} d > 0$.

\subsection{Hopf bifurcations}

By \eqref{eq:aHB}, $X^*(\nu,\eta)$ has purely imaginary eigenvalues when $\eta = 0$.
Two genericity conditions are required for generic Hopf bifurcations to occur along the line $\eta = 0$,
in a neighbourhood of $\nu = 0$.
For transversality, we require
\begin{equation}
\frac{\partial \alpha}{\partial \eta}(0,0) > 0,
\label{eq:aeta}
\end{equation}
where the positive sign has been chosen without loss of generality.
For a unique limit cycle, we require
\begin{equation}
\chi_{\rm HB} \ne 0,
\label{eq:aLimitCycle}
\end{equation}
where $\chi_{\rm HB}$ is the non-degeneracy coefficient for the Hopf bifurcation.
The value of $\chi_{\rm HB}$ depends on the first, second, and third spatial derivatives
of $F_R$ evaluated at $(X;\nu,\eta) = (X^*(0,0);0,0)$ \cite{Gl99,Ku04}.
If $\chi_{\rm HB} < 0$, the Hopf bifurcation is {\em supercritical}
and creates a limit cycle for $\eta > 0$.
If $\chi_{\rm HB} > 0$, the Hopf bifurcation is {\em subcritical}
and creates a limit cycle for $\eta < 0$.

The limit cycle is stable only if the Hopf bifurcation is supercritical and $\gamma_0 < 0$.
As $(\nu,\eta) \to (0,0)$, the Floquet multipliers of the limit cycle
tend to $1$, $1$, and $\re^\frac{2 \pi \gamma_0}{\beta_0}$, see \eqref{eq:DQglobal}.

\subsection{Grazing-sliding bifurcations}

Let $w^{\sf T}$ and $v$ be left and right eigenvectors
of $M_R$ corresponding to the eigenvalue $\gamma_0$,
and normalised by $w^{\sf T} v = 1$.
The vector $w$ is orthogonal to any center manifold that
contains the limit cycle.
Consequently, we assume
\begin{equation}
\text{$u$ and $w$ are linearly independent},
\label{eq:auw}
\end{equation}
so that this manifold intersects the discontinuity surface transversally.

Now consider a two-dimensional Poincar\'e map $P$ defined from a Poincar\'e section
$\Pi \subset \mathbb{R}^3$, as illustrated in Fig.~\ref{fig:schem3}.
The particular choice of $\Pi$ is not important,
but it needs to be $C^3$, in the right half space $H > 0$, and transverse to the flow.
We can assume $P$ is piecewise-smooth,
with a piece $P_L$ corresponding to orbits with sliding segments,
and a piece $P_R$ corresponding to orbits that do not reach $\Sigma_{\nu,\eta}$ before returning to $\Pi$.
The switching manifold of $P$ corresponds to orbits that graze $\Sigma_{\nu,\eta}$.
Here $P_L = P_R$, hence $P$ is continuous.
The piece $P_R$ is $C^3$ because $F_R$ and $\Pi$ are $C^3$ \cite{Me07},
while $P_L$ is $C^1$ but usually not $C^2$ \cite{DiKo02}.

By truncating $P_L$ and $P_R$ to first order,
we obtain a piecewise-linear continuous map that approximates $P$.
Typically the truncated map reproduces the dynamics of $P$
quantitatively near the grazing-sliding bifurcation.
The truncated map can be converted to the normal form \eqref{eq:bcnf}
via an affine change of coordinates if an observability condition holds \cite{Si16}.

Importantly, the parameter values $\tau_L$, $\delta_L$, $\tau_R$, and $\delta_R$ of the normal form
can be evaluated without executing such a change of coordinates.
This is because these values are the traces and determinants of $\rD P_L$ and $\rD P_R$
evaluated at the grazing-sliding bifurcation.
Moreover, they are continuous functions of $\nu$, and
\begin{equation}
\delta_L(\nu) = 0, \qquad \text{for all $\nu > 0$},
\label{eq:}
\end{equation}
because the range of $P_L$ is one-dimensional.
Theorem \ref{th:main} provides formulas for the limiting values of the remaining parameters.

\subsection{Theorem statement}

We now state the main result.
Notice the boundary Hopf bifurcation unfolded by Theorem \ref{th:main} is codimension-two
because \eqref{eq:aBEB} and \eqref{eq:aHB} are independent codimension-one constraints.

\begin{theorem}
Consider a system \eqref{eq:F} where $F_L$, $F_R$, and $H$ are $C^3$,
and the right piece of the system has an equilibrium $X^*(\nu,\eta)$
satisfying \eqref{eq:aBEB} and \eqref{eq:aHB}.
Suppose the genericity conditions \eqref{eq:aEigs} and \eqref{eq:avL}--\eqref{eq:auw} are met.
Then there exists $\delta > 0$ and a unique $C^2$ function $\phi_{\rm GS} : [0,\delta) \to \mathbb{R}$,
with $\phi_{\rm GS}(0) = 0$, $\frac{d \phi_{\rm GS}}{d \nu}(0) = 0$,
and ${\rm sgn} \left( \frac{d^2 \phi_{\rm GS}}{d \nu^2}(0) \right) = -{\rm sgn} \left( \chi_{\rm HB} \right)$, such that
\begin{enumerate}
\renewcommand{\labelenumi}{\arabic{enumi})}
\item
persistence [nonsmooth-fold] BEBs occur on $\nu = 0$ with $-\delta < \eta < \delta$
if $u^{\sf T} M_R^{-1} d < 0$ [$u^{\sf T} M_R^{-1} d > 0$];
\item
supercritical [subcritical] Hopf bifurcations occur on $\eta = 0$ with $0 < \nu < \delta$
if $\chi_{\rm HB} < 0$ [$\chi_{\rm HB} > 0$];
\item
grazing-sliding bifurcations occur on $\eta = \phi_{\rm GS}(\nu)$ with $0 < \nu < \delta$.
\end{enumerate}
Moreover, with $P_L$ and $P_R$ defined as above,
\begin{align}
\lim_{\nu \to 0^+} \tau_L(\nu) &= \frac{u^{\sf T} v w^{\sf T} d}{u^{\sf T} d} \left( 1 - \re^{\frac{2 \pi \gamma_0}{\beta_0}} \right)
+ \re^{\frac{2 \pi \gamma_0}{\beta_0}}, \label{eq:tL0} \\
\lim_{\nu \to 0^+} \tau_R(\nu) &= \re^{\frac{2 \pi \gamma_0}{\beta_0}} + 1, \label{eq:tR0} \\
\lim_{\nu \to 0^+} \delta_R(\nu) &= \re^{\frac{2 \pi \gamma_0}{\beta_0}}, \label{eq:dR0}
\end{align}
where $\tau_L(\nu)$ and $\delta_L(\nu) = 0$ are the trace and determinant
of $\rD P_L$ evaluated at grazing-sliding bifurcation,
and $\tau_R(\nu)$ and $\delta_R(\nu)$ are the trace and determinant
of $\rD P_R$ evaluated at grazing-sliding bifurcation.
\label{th:main}
\end{theorem}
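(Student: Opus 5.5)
The proof splits into two parts. First come the codimension-one bifurcation curves, items 1)--3). Then come the limiting traces and determinants \eqref{eq:tL0}--\eqref{eq:dR0}.

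For item 1), I would use \eqref{eq:aBEB} and \eqref{eq:anu}. Together they make $\nu=0$ the locus where $X^*$ crosses $\Sigma$ transversally. The persistence/nonsmooth-fold dichotomy then follows directly from the result of di Bernardo \emph{et al.}~\cite{DiNo08} via the sign of $u^{\sf T} M_R^{-1} d$. The hypothesis $u^{\sf T} d>0$ ensures a sliding region near $X_0$, so the pseudo-equilibrium is well defined.

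For item 2), \eqref{eq:aHB}, \eqref{eq:aeta} and \eqref{eq:aLimitCycle} are exactly the hypotheses of the standard Hopf theorem \cite{Ku04}. This gives a Hopf cycle of amplitude $r(\nu,\eta)\sim\sqrt{-\eta\,\partial_\eta\alpha/\chi_{\rm HB}}$, existing for $\eta$ of sign $-{\rm sgn}(\chi_{\rm HB})$. The bifurcation is admissible only where $X^*$ is admissible, namely $\nu>0$.

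For item 3), I would work on a $C^2$ centre manifold of $F_R$ through $X^*$, tangent at $X_0$ to the plane $w^{\perp}$. In polar-type coordinates the cycle is, to leading order, an ellipse of size $\sqrt{|\eta|}$ centred at $X^*(\nu,\eta)$. By \eqref{eq:anu}, $H(X^*)\approx c\,\nu$ with $c>0$.

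By \eqref{eq:auw}, $H$ restricted to the centre manifold has nonzero gradient. Hence the minimum of $H$ over the cycle is $c\nu-\kappa\sqrt{|\eta|}+\cdots$ for some $\kappa>0$. Grazing is the zero of this minimum. To apply the implicit function theorem I would write $\eta={\rm sgn}(-\chi_{\rm HB})s^2$ and solve for $s$ as a $C^2$ function of $\nu$, obtaining $s=c\nu/\kappa+O(\nu^2)$. This gives $\phi_{\rm GS}(\nu)\propto -{\rm sgn}(\chi_{\rm HB})\nu^2$, with the stated value and derivative at $0$ and second derivative of the stated sign. Uniqueness comes from the implicit function theorem. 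Nondegeneracy of the grazing (a quadratic fold of the cycle with $\Sigma$) also follows from the ellipse geometry.

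For the limits, $\rD P_R$ at the grazing point is the derivative of the return map of the Hopf cycle. Its eigenvalues are the two nontrivial Floquet multipliers. As $(\nu,\eta)\to(0,0)$ these tend to $1$ and $\re^{2\pi\gamma_0/\beta_0}$, as stated before the theorem (via the linearisation $\re^{M_R T}$ with $T\to 2\pi/\beta_0$). This gives \eqref{eq:tR0} and \eqref{eq:dR0}.

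For $\rD P_L$ I would write $\rD P_L=\rD P_R\,(I+E)$, where $E$ is the linear part of the grazing-sliding discontinuity map derived in Appendix~\ref{app:formula}. That map is a rank-one correction projecting out the normal direction along the vector field $F_L$ relative to $F_R$, so $\det \rD P_L=0$.

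The trace is then computed in the limit. The cycle shrinks to $X_0$, $\rD P_R\to$ the restriction of $\re^{2\pi M_R/\beta_0}$, which equals $I+(\re^{2\pi\gamma_0/\beta_0}-1)vw^{\sf T}$ on the full space. The discontinuity correction tends to a projection of the form $I-\frac{d\,u^{\sf T}}{u^{\sf T} d}$ modulo the flow direction. Taking the trace of the product on the two-dimensional section yields $1+\re^{2\pi\gamma_0/\beta_0}-\bigl[1+(\re^{2\pi\gamma_0/\beta_0}-1)\frac{u^{\sf T} v\, w^{\sf T} d}{u^{\sf T} d}\bigr]$. This rearranges to \eqref{eq:tL0}.

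The main obstacle is this last limit. The grazing point and the flow direction both degenerate as the cycle shrinks, because $F_R\to\bO$ at $X_0$. One must therefore rescale by $\sqrt{\eta}$ so that the discontinuity-map vectors, evaluated at the grazing point, have well-defined limits, and then check that the section-dependence cancels in the trace. I would first try doing the whole computation in $\mathbb{R}^3$ with the saltation-like matrix and monodromy matrix, whose traces minus one equal the section traces.
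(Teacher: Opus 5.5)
Your proposal is correct. Items 1)--3) and the limits of $\tau_R$ and $\delta_R$ agree in substance with the paper's Step 1 and its Floquet-multiplier reading of $\rD Q_{\rm global}$. Your computation of $\lim \tau_L$, however, takes a genuinely different route.

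\textbf{The paper's route.} It blows up an $\cO(\nu)$ neighbourhood of the cycle and puts $M_R$ in real Jordan form. It then multiplies two explicit $2 \times 2$ matrices:
\begin{itemize}
\item $\rD Q_{\rm global}$, obtained from the closed-form linear flow together with the first-order correction to the return time;
\item $\rD Q_{{\rm disc},L}$, obtained from the components of $Z$ in $(Y_1,Y_3)$-coordinates, which requires a case split on $a \ne 0$.
\end{itemize}

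\textbf{Your route.} You take traces in $\mathbb{R}^3$ of $\Phi S$. Here $\Phi$ is the monodromy matrix based at the grazing point $G$, and $S = I + Z \nabla H(G)^{\sf T}$ extends the linearised discontinuity map. The check you flag as the main obstacle does go through, and it needs no rescaling by $\sqrt{\eta}$:
\begin{itemize}
\item $\Phi F_R(G) = F_R(G)$. Since $\nabla H(G)^{\sf T} F_R(G) = 0$ at a tangency, also $S F_R(G) = F_R(G)$.
\item Hence the linearised return map on any section is conjugate to the map that $\Phi S$ induces on $\mathbb{R}^3/{\rm span}(F_R(G))$. Its trace is ${\rm trace}(\Phi S) - 1$.
\item Write $Z = \alpha F_R(G) - F_L(G)/\nabla H(G)^{\sf T} F_L(G)$. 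The coefficient $\alpha$ is generally $\cO(1/\nu)$. But its contribution to ${\rm trace}(\Phi S) = {\rm trace}\,\Phi + \nabla H(G)^{\sf T} \Phi Z$ is $\alpha \nabla H(G)^{\sf T} \Phi F_R(G) = \alpha \cL_{F_R} H(G) = 0$, identically in $\nu$.
\item What remains is ${\rm trace}\,\Phi - \nabla H(G)^{\sf T} \Phi F_L(G) / \nabla H(G)^{\sf T} F_L(G)$. Every quantity here converges in the original coordinates: $\Phi \to I + (\re^{2\pi\gamma_0/\beta_0} - 1) v w^{\sf T}$, $F_L(G) \to d$, and $\nabla H(G) \to u$.
\end{itemize}

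\textbf{Comparison.} Your route is shorter and coordinate-free, and it avoids the case split on $a$. It also shows transparently where $u^{\sf T} v w^{\sf T} d$ comes from: it is the part of $u^{\sf T} \re^{2\pi M_R/\beta_0} d$ that differs from $u^{\sf T} d$. The paper's route costs more but yields the full limiting linearisations rather than only their traces.

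\textbf{A small correction.} The factorisation $\rD P_L = \rD P_R (I+E)$ is not literally valid on $\Pi$, because the discontinuity map acts on $\Omega$. It holds only up to conjugation, which your $\mathbb{R}^3$ formulation absorbs automatically.
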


In summary, there are six genericity conditions:
\begin{itemize}
\setlength{\itemsep}{0pt}
\item 
\eqref{eq:aEigs} ensures the equilibrium does not have a zero eigenvalue,
\item 
\eqref{eq:avL} ensures the existence of an attracting sliding region,
\item 
\eqref{eq:anu} is the transversality condition for the boundary equilibrium bifurcation,
\item 
\eqref{eq:aeta} is the transversality condition for the Hopf bifurcation,
\item 
\eqref{eq:aLimitCycle} is the non-degeneracy condition for the Hopf bifurcation, and
\item 
\eqref{eq:auw} is the transversality condition for the grazing-sliding bifurcation.
\end{itemize}

\section{Proof of the main result}
\label{sec:proof}

In this section we prove Theorem \ref{th:main} in four steps.
In Step 1 we appeal to prior publications to verify items 1, 2, and 3 of Theorem \ref{th:main}.
Then in Step 2 we perform a coordinate change that blows up
an $\cO(\nu)$ neighbourhood of the limit cycle into an $\cO(1)$ region.
This allows us to study the Poincar\'e map in the limit $\nu \to 0$.
We choose the coordinate change in a way
that converts the Jacobian matrix $M_R$ to real Jordan form,
as this greatly simplifies our later calculations of the flow.

In Step 3 we introduce a second Poincar\'e section
and express the Poincar\'e map as a composition.
Specifically we use a {\em discontinuity map} \cite{DiBu08}
so that sliding segments can be treated as a local correction
about the point at which the limit cycle grazes the discontinuity surface.

In Step 4 we compute the derivatives of the two pieces of the Poincar\'e map,
and evaluate their traces and determinants.
For the piece of the map corresponding to orbits that do not meet $\Sigma$,
the derivative is computed from an explicit formula for the flow in the limit $\nu \to 0$.
For the piece of the map corresponding to orbits with sliding segments,
the derivative is computed using asymptotic calculations associated with the local correction.

\begin{proof}[Proof of Theorem \ref{th:main}]~\\
\myStep{1}{The three codimension-one bifurcation curves.}
Item 1 of Theorem \ref{th:main} is a consequence of direct calculations
of the pseudo-equilibrium, for details see \cite{DiNo08,Si18d}.
Item 2 of Theorem \ref{th:main}
is effectively a restatement of the Hopf bifurcation theorem \cite{Gl99,Ku04}.
The Hopf bifurcations occur for $\nu > 0$, because by \eqref{eq:anu}
the equilibrium $X^*(\nu,\eta)$ is admissible for small $\nu > 0$.

By the Hopf bifurcation theorem, the diameter of the limit cycle
is asymptotically proportional to $\sqrt{|\eta|}$.
By \eqref{eq:anu}, the distance of $X^*(\nu,\eta)$ from $\Sigma_{\nu,\eta}$ is asymptotically to $\nu$.
Also, by \eqref{eq:auw}, at first order the limit cycle does not grow parallel to $\Sigma_{\nu,\eta}$
as $\eta$ is varied from zero.
It follows that the limit cycle grazes $\Sigma_{\nu,\eta}$ on a curve $\eta = \phi_{\rm GS}(\nu)$
that is quadratically tangent to $\eta = 0$ at $(\nu,\eta) = (0,0)$.
Formally this demonstrated in \cite{Si10,SiKo09} where $\phi_{\rm GS}$
is shown to be $C^2$ via the implicit function theorem.

The Hopf cycle exists for small $\eta > 0$ if $\chi_{\rm HB} < 0$,
and small $\eta < 0$ if $\chi_{\rm HB} > 0$.
This is because by \eqref{eq:aeta}
the complex eigenvalues associated with the equilibrium 
are stable for $\eta < 0$, and unstable for $\eta > 0$.
Therefore $\frac{d^2 \phi_{\rm GS}}{d \nu^2}(0) > 0$ if $\chi_{\rm HB} < 0$,
and $\frac{d^2 \phi_{\rm GS}}{d \nu^2}(0) < 0$ if $\chi_{\rm HB} > 0$.

For the remainder of the proof we consider $\eta = \phi_{\rm GS}(\nu)$, where $\nu > 0$ is small.

\myStep{2}{A spatial blow-up and conversion to real Jordan form.}
Recall, $M_R$ has eigenvalues $\pm \ri \beta_0$ and $\gamma_0$.
Thus there exists invertible $T \in \mathbb{R}^{3 \times 3}$ such that
\begin{equation}
\tilde{M}_R = T^{-1} M_R T = \begin{bmatrix}
0 & \beta_0 & 0 \\
-\beta_0 & 0 & 0 \\
0 & 0 & \gamma_0
\end{bmatrix},
\label{eq:realJordanForm}
\end{equation}
is in real Jordan form.
Let
\begin{equation}
e_3 = \begin{bmatrix} 0 \\ 0 \\ 1 \end{bmatrix},
\nonumber
\end{equation}
and recall that $w^{\sf T}$ and $v$ are left and right eigenvectors of $M_R$ for the eigenvalue $\gamma_0$.
By \eqref{eq:realJordanForm}, $w^{\sf T}$ is a scalar multiple of $e_3^{\sf T} T^{-1}$,
while $v$ is a scalar multiple of $T e_3$.
By assumption $w^{\sf T} v = 1$, thus
\begin{equation}
v w^{\sf T} = T e_3 e_3^{\sf T} T^{-1}.
\label{eq:dyadicProduct}
\end{equation}
Also $u^{\sf T} T$ and $e_3^{\sf T}$ are linearly independent by \eqref{eq:auw}.

We now apply the affine coordinate change
\begin{equation}
Y = \frac{1}{\nu} \,T^{-1} \left( X - X^*(\nu,\phi_{\rm GS}(\nu)) \right),
\label{eq:coordChange}
\end{equation}
to the system \eqref{eq:F}.
This shifts the equilibrium to the origin,
blows up phase space by a factor $\frac{1}{\nu}$,
and puts the Jacobian matrix of the origin in real Jordan form in the limit $\nu \to 0$,
see Fig.~\ref{fig:schem1}.
We write the system in $Y$-coordinates as
\begin{equation}
\dot{Y} = \begin{cases}
\tilde{F}_L(Y;\nu), & \tilde{H}(Y;\nu) < 0, \\
\tilde{F}_R(Y;\nu), & \tilde{H}(Y;\nu) > 0,
\end{cases}
\label{eq:Ftransformed}
\end{equation}
where $\tilde{H} = \frac{H}{\nu}$.
By \eqref{eq:udMR},
\begin{align}
\tilde{F}_L(Y;\nu) &= \frac{1}{\nu} \left( T^{-1} d + \cO(\nu) \right), \label{eq:tildeFL2} \\
\tilde{F}_R(Y;\nu) &= \tilde{M}_R Y + \cO(\nu), \label{eq:tildeFR2} \\
\tilde{H}(Y;\nu) &= u^{\sf T} T Y + \psi + \cO(\nu), \label{eq:tildeH2}
\end{align}
where
\begin{equation}
\psi = \frac{\partial H(X^*(\nu,\eta);\nu,\eta)}{\partial \nu} \bigg|_{(0,0)}
\label{eq:anu2}
\end{equation}
is positive by \eqref{eq:anu}.

Let $\tilde{\Sigma}_\nu$, $\tilde{\Gamma}_\nu$, and $\tilde{\Pi}_\nu$ denote the $Y$-coordinate versions
of $\Sigma_{\nu,\phi_{\rm GS}(\nu)}$, $\Gamma_{\nu,\phi_{\rm GS}(\nu)}$,
and $\Pi_{\nu,\phi_{\rm GS}(\nu)}$, respectively, Fig.~\ref{fig:schem1}.
Let $Q$ be the transformed Poincar\'e map
defined by the first return of orbits of \eqref{eq:Ftransformed} to $\tilde{\Pi}_\nu$.
This map is two-dimensional, using coordinates on $\tilde{\Pi}_\nu$ that will be introduced in Step 4.
The map $Q$ is piecewise-smooth, with pieces $Q_L$ and $Q_R$
corresponding to orbits with sliding segments, and without sliding segments, respectively.
These pieces are the transformed versions of $P_L$ and $P_R$.
Thus, evaluated at the point at which the limit cycle grazes the discontinuity surface,
$\rD P_L$ and $\rD Q_L$ are similar,
and $\rD P_R$ and $\rD Q_R$ are similar.
Therefore the traces and determinants of $\rD P_L$ and $\rD P_R$
are identical to those of $\rD Q_L$ and $\rD Q_R$.

\begin{figure}[b!]
\begin{center}
\includegraphics[width=8cm]{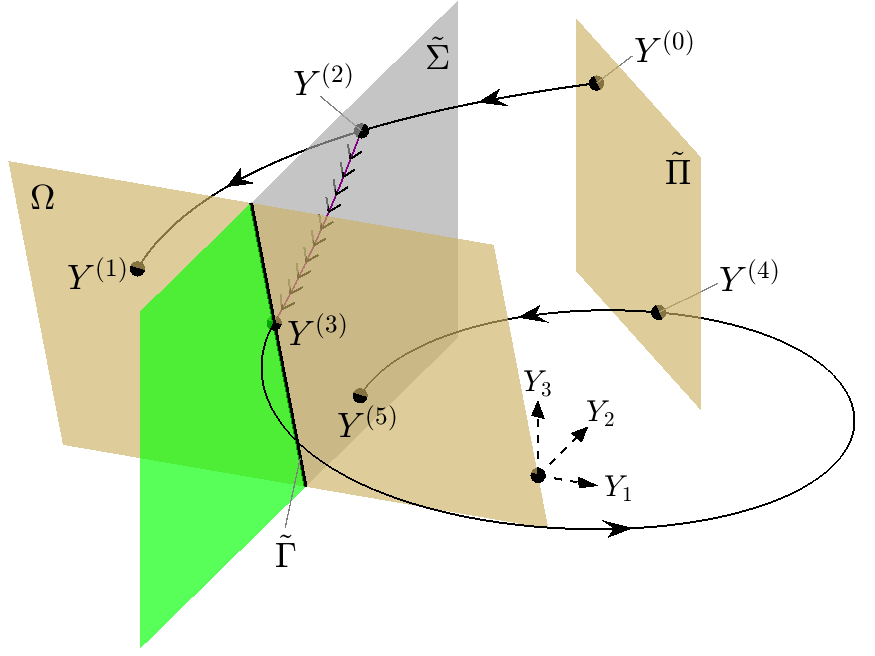}
\caption{
The phase space of \eqref{eq:F} in the $Y$-coordinate form \eqref{eq:Ftransformed}.
We show a sample orbit with a sliding segment,
and its virtual extension to the point $Y^{(1)} \in \Omega$.
\label{fig:schem1}
} 
\end{center}
\end{figure}

\myStep{3}{The Poincar\'e map $Q$ as a composition.}
We now introduce the additional cross-section of phase space
\begin{equation}
\Omega_\nu = \left\{ Y \in \mathbb{R}^3 \,\middle|\, \cL_{\tilde{F}_R} \tilde{H}(Y;\nu) = 0 \right\}.
\nonumber
\end{equation}
As illustrated in Fig.~\ref{fig:schem1},
$\Omega_\nu$ intersects the discontinuity surface $\tilde{\Sigma}_\nu$
along the fold curve $\tilde{\Gamma}_\nu$.
In a moment we will use $\Omega_\nu$ as the domain of a discontinuity map
and to express the Poincar\'e map as a composition.
First we compute $\Omega_\nu$ and its relation to the flow to first order.

By \eqref{eq:tildeFR2} and \eqref{eq:tildeH2},
\begin{equation}
\cL_{\tilde{F}_R} \tilde{H}(Y;\nu) = u^{\sf T} T \tilde{M}_R Y + \cO(\nu).
\nonumber
\end{equation}
Write
\begin{equation}
u^{\sf T} T = \begin{bmatrix} a & b & c \end{bmatrix},
\label{eq:abc}
\end{equation}
where $a,b,c \in \mathbb{R}$,
and notice $a^2 + b^2 \ne 0$ because
$u^{\sf T} T$ and $e_3^{\sf T}$ are linearly independent.
Since $\tilde{M}_R$ is given by \eqref{eq:realJordanForm}, we have
\begin{equation}
\cL_{\tilde{F}_R} \tilde{H}(Y;\nu) = -b \beta_0 Y_1 + a \beta_0 Y_2 + c \gamma_0 Y_3 + \cO(\nu).
\label{eq:tildevR2}
\end{equation}

Recall, we are considering $\eta = \phi_{\rm GS}(\nu)$,
so the Hopf cycle intersects the discontinuity surface at a single point.
Let $G(\nu)$ denote this point in $Y$-coordinates.
This point is a zero of $\tilde{H}$ and $\cL_{\tilde{F}_R} \tilde{H}$,
and belongs to the centre manifold $Y_3 = \cO(\nu)$ associated with the equilibrium $Y = \bO$.
So by using \eqref{eq:tildeH2} and \eqref{eq:tildevR2}, we obtain
\begin{equation}
G(\nu) = \frac{-\psi}{a^2 + b^2} \begin{bmatrix} a \\ b \\ 0 \end{bmatrix} + \cO(\nu).
\label{eq:grazPoint2}
\end{equation}

For sufficiently small $\nu > 0$, the gradient vector of \eqref{eq:tildevR2} at $Y = G(\nu)$ is non-zero,
thus $\Omega_\nu$ is a smooth two-dimensional manifold.
Specifically it is $C^2$ because $\tilde{F}_R$ and $\tilde{H}$ are $C^3$,
so $\cL_{\tilde{F}_R} \tilde{H}$ is $C^2$.
In a neighbourhood of $G(\nu)$,
the flow of $\dot{Y} = \tilde{F}_R(Y;\nu)$ intersects $\Omega_\nu$ transversally.
This is because by \eqref{eq:tildeFR2}, \eqref{eq:tildevR2}, and \eqref{eq:grazPoint2},
\begin{equation}
\cL^2_{\tilde{F}_R} \tilde{H}(G(\nu);\nu) = \beta_0^2 \psi + \cO(\nu),
\label{eq:aR}
\end{equation}
is non-zero for sufficiently small values of $\nu$.

Near the limit cycle,
the forward orbits of points on $\tilde{\Pi}_\nu$ pass through $\Omega_\nu$
before returning to $\tilde{\Pi}_\nu$.
For orbits following $\dot{Y} = \tilde{F}_R(Y;\nu)$,
let $Q_{\rm in}$ denote the map from $\tilde{\Pi}_\nu$ to $\Omega_\nu$,
and let $Q_{\rm out}$ denote the map from $\Omega_\nu$ to $\tilde{\Pi}_\nu$.
These maps are $C^2$, and
\begin{align}
Q_L &= Q_{\rm out} \circ Q_{{\rm disc},L} \circ Q_{\rm in} \,, &
Q_R &= Q_{\rm out} \circ Q_{\rm in} \,, &
\label{eq:QLQR}
\end{align}
where $Q_{{\rm disc},L}$ is a {\em discontinuity map} that accounts for sliding segments.
For the orbit shown in Fig.~\ref{fig:schem1},
$Q_{\rm in}$ maps $Y^{(0)}$ to $Y^{(1)}$,
$Q_{{\rm disc},L}$ maps $Y^{(1)}$ to $Y^{(3)}$,
and $Q_{\rm out}$ maps $Y^{(3)}$ to $Y^{(4)}$.
The point $Y^{(1)}$ does not belong to the orbit of \eqref{eq:Ftransformed};
it is virtual, constructed by smoothly extending $\tilde{F}_R$ beyond $\tilde{\Sigma}_\nu$.

Now let
\begin{equation}
Q_{\rm global} = Q_{\rm in} \circ Q_{\rm out} \,,
\nonumber
\end{equation}
which in Fig.~\ref{fig:schem1} takes $Y^{(3)}$ to $Y^{(5)}$.
By \eqref{eq:QLQR}, $Q_L$ and $Q_R$ are conjugate to $Q_{\rm global} \circ Q_{{\rm disc},L}$,
and $Q_{\rm global}$, respectively.
Thus $\rD Q_L$ and $\rD Q_R$ are similar to 
$\rD Q_{\rm global}(Q_{{\rm disc},L}) \rD Q_{{\rm disc},L}$ and $\rD Q_{\rm global}$, respectively.
Therefore
\begin{equation}
\begin{split}
\tau_L(\nu) &= {\rm trace} \left( \rD Q_{\rm global}(Q_{{\rm disc},L})
\rD Q_{{\rm disc},L} \middle) \right|_{Y = G(\nu)} \,, \\
\tau_R(\nu) &= {\rm trace} \left( \rD Q_{\rm global} \middle) \right|_{Y = G(\nu)} \,, \\
\delta_R(\nu) &= \det \left( \rD Q_{\rm global} \middle) \right|_{Y = G(\nu)} \,.
\end{split}
\label{eq:tauLtauRdeltaR}
\end{equation}
To the complete the proof it remains for us
to evaluate the right-hand sides of \eqref{eq:tauLtauRdeltaR} in the limit $\nu \to 0$.

\myStep{4}{Evaluation of the traces and determinants.}
For the remainder of the proof we suppose $a \ne 0$
so that we can use $(Y_1,Y_3)$ coordinates on $\Omega_\nu$
(if $a = 0$ the proof can be completed in the same
fashion using $(Y_2,Y_3)$ coordinates).
So we write $Q_{\rm global} = Q_{\rm global}(Y_1,Y_3;\nu)$,
and $Q_{{\rm disc},L} = Q_{\rm disc}(Y_1,Y_3;\nu)$.
Notice $\left( G(\nu)_1, G(\nu)_3 \right)$ is a fixed point of both maps.

We now evaluate the derivatives $\rD Q_{\rm global}$ and $\rD Q_{{\rm disc},L}$ at
$\left( G(\nu)_1, G(\nu)_3 \right)$ in the limit $\nu \to 0$.
To this end, we consider values of $Y_1$ and $Y_3$ that are slightly perturbed from $G(0)_1$ and $G(0)_3$,
and set $Y_2$ so that $Y \in \Omega_0$:
\begin{equation}
Y = \begin{bmatrix}
\frac{-a \psi}{a^2 + b^2} + \ee_1 \\
\frac{-b \psi}{a^2 + b^2} + \frac{b}{a} \,\ee_1 - \frac{c \gamma_0}{a \beta_0} \,\ee_3 \\
\ee_3
\end{bmatrix},
\label{eq:perturbedPoint}
\end{equation}
using the formulas \eqref{eq:tildevR2} and \eqref{eq:grazPoint2}.

Write $(Y_1',Y_3') = Q_{\rm global}(Y_1,Y_3;0)$.
The vector field $\tilde{F}_R$ is linear in the limit $\nu \to 0$,
so by using a closed-form expression for its flow, detailed in Appendix \ref{app:global}, we obtain
\begin{align}
\rD Q_{\rm global} \left( G(0)_1, G(0)_3; 0 \right)
&= \begin{bmatrix} \frac{\partial Y_1'}{\ee_1} & \frac{\partial Y_1'}{\ee_3} \\[1.2mm]
\frac{\partial Y_3'}{\ee_1} & \frac{\partial Y_3'}{\ee_3} \end{bmatrix} \nonumber \\
&= \begin{bmatrix} 1 & -\frac{b c \gamma_0}{\left( a^2 + b^2 \right) \beta_0} \left( 1 - \re^{\frac{2 \pi \gamma_0}{\beta_0}} \right) \\[1.2mm]
0 & \re^{\frac{2 \pi \gamma_0}{\beta_0}} \end{bmatrix}.
\label{eq:DQglobal}
\end{align}
By evaluating the trace and determinant of this matrix
we obtain \eqref{eq:tR0} and \eqref{eq:dR0}.

Now write $(Y_1',Y_3') = Q_{{\rm disc},L}(Y_1,Y_3;0)$.
An explicit formula for the linear term
of this type of discontinuity map is given as equation (8.74) in \cite{DiBu08},
but has a typo, and the formula, in our notation, should be
\begin{equation}
Y' = Y + Z(Y;0) \tilde{H}(Y;0) + \cO \left( \left( |\ee_1| + |\ee_3| \right)^{\frac{3}{2}} \right),
\label{eq:8p74}
\end{equation}
where $Z(Y;\nu)$ is the vector field
\begin{equation}
Z = \frac{\cL_{\tilde{F}_L} \cL_{\tilde{F}_R} \tilde{H}}
{\big( \cL_{\tilde{F}_R}^2 \tilde{H} \big) \big( \cL_{\tilde{F}_L} \tilde{H} \big)} \,\tilde{F}_R
- \frac{1}{\cL_{\tilde{F}_L} \tilde{H}} \,\tilde{F}_L \,,
\label{eq:8p74vector}
\end{equation}
as derived in Appendix \ref{app:formula}.
By evaluating the components of \eqref{eq:8p74vector}, see Appendix \ref{app:disc}, we obtain
\begin{align}
\rD Q_{\rm disc} \left( G(0)_1, G(0)_3; 0 \right)
&= \begin{bmatrix} \frac{\partial Y_1'}{\ee_1} & \frac{\partial Y_1'}{\ee_3} \\[1.2mm]
\frac{\partial Y_3'}{\ee_1} & \frac{\partial Y_3'}{\ee_3} \end{bmatrix} \nonumber \\
&= \begin{bmatrix}
\frac{(a \beta_0 - b \gamma_0) c r}{a \beta_0 (a p + b q + c r)} &
\frac{-(a \beta_0 - b \gamma_0) c}{\beta_0 \left( a^2 + b^2 \right)(a p + b q + c r)}
\left( a p + b q + \frac{b c r \gamma_0}{a \beta_0} \right) \\[1.2mm]
\frac{-\left( a^2 + b^2 \right) r}{a (a p + b q + c r)} &
1 - \frac{(a \beta_0 - b \gamma_0) c r}{a \beta_0 (a p + b q + c r)}
\end{bmatrix},
\label{eq:DQdisc}
\end{align}
writing also
\begin{equation}
T^{-1} d = \begin{bmatrix} p \\ q \\ r \end{bmatrix},
\label{eq:pqr}
\end{equation}
for the vector in \eqref{eq:tildeFL2}.
The trace of the product of \eqref{eq:DQglobal} and \eqref{eq:DQdisc} is
\begin{equation}
\lim_{\nu \to 0^+} \tau_L(\nu) 
= \frac{c r}{a p + b q + c r} \left( 1 - \re^{\frac{2 \pi \gamma_0}{\beta_0}} \right)
+ \re^{\frac{2 \pi \gamma_0}{\beta_0}}.
\label{eq:tL02}
\end{equation}
By \eqref{eq:abc} and \eqref{eq:pqr},
\begin{align}
a p + b q + c r &= u^{\sf T} T T^{-1} d = u^{\sf T} d, \nonumber \\
c r &= u^{\sf T} T e_3 e_3^{\sf T} T^{-1} c = u^{\sf T} v w^{\sf T} c, \nonumber
\end{align}
using also \eqref{eq:dyadicProduct}.
Thus \eqref{eq:tL02} is equivalent to \eqref{eq:tL0} as required.
\end{proof}

\section{A subfamily of the two-dimensional border-collision normal form}
\label{sec:bcnf}

Along the curve $\eta = \phi_{\rm GS}(\nu)$ of grazing-sliding bifurcations,
the dynamics created in the bifurcation are in generic situations described by the 
two-dimensional border-collision normal form \eqref{eq:bcnf}
with $\delta_L = 0$ and some values of $\tau_L$, $\tau_R$, and $\delta_R$.
The grazing limit cycle corresponds to a fixed point of the right piece of the map.
If this limit cycle is stable before grazing,
then $\delta_R > \tau_R - 1$, and the fixed point is admissible for $\mu > 0$.
In this case the dynamics after grazing correspond to $\mu < 0$,
and by scaling it suffices to set $\mu = -1$.

Taking $\nu \to 0$ in Theorem \ref{th:main} yields $\delta_R = \tau_R - 1$.
With also $\delta_L = 0$ and $\mu = -1$,
the normal form \eqref{eq:bcnf} reduces to the two-parameter family
\begin{equation}
\begin{bmatrix} x \\ y \end{bmatrix} \mapsto
\begin{cases}
\begin{bmatrix} \tau_L x + y - 1 \\ 0 \end{bmatrix}, & x \le 0, \\[4mm]
\begin{bmatrix} \tau_R x + y - 1 \\ (1-\tau_R) x \end{bmatrix}, & x \ge 0.
\end{cases}
\label{eq:reducedbcnf}
\end{equation}
Numerical explorations suggest that for all $(\tau_L,\tau_R) \in \mathbb{R}^2$,
\eqref{eq:reducedbcnf} has either a unique attractor or no attractor (i.e.~typical forward orbits diverge).
In this section we explain how the $(\tau_L,\tau_R)$-plane
divides into different regions according to the nature of the attractor.
In the case of chaotic attractors, the results are numerical.
Proofs are beyond the scope of this paper,
but could be achieved by following the techniques of \cite{GlSi21,Ko05,Mi80}
whereby polygonal trapping regions are constructed,
and the attractor is characterised as a certain piecewise-linear set
on which the dynamics are transitive.
Throughout this section we write
\begin{align}
g_L(x,y) &= \begin{bmatrix} \tau_L x + y - 1 \\ 0 \end{bmatrix}, &
g_R(x,y) &= \begin{bmatrix} \tau_R x + y - 1 \\ (1-\tau_R) x \end{bmatrix},
\nonumber
\end{align}
for the left and right pieces of \eqref{eq:reducedbcnf}.

\subsection{Overview}
\label{sub:overview}

\begin{figure}[b!]
\begin{center}
\setlength{\unitlength}{1cm}
\begin{picture}(15.6,13.1)
\put(0,5.6){\includegraphics[height=7.5cm]{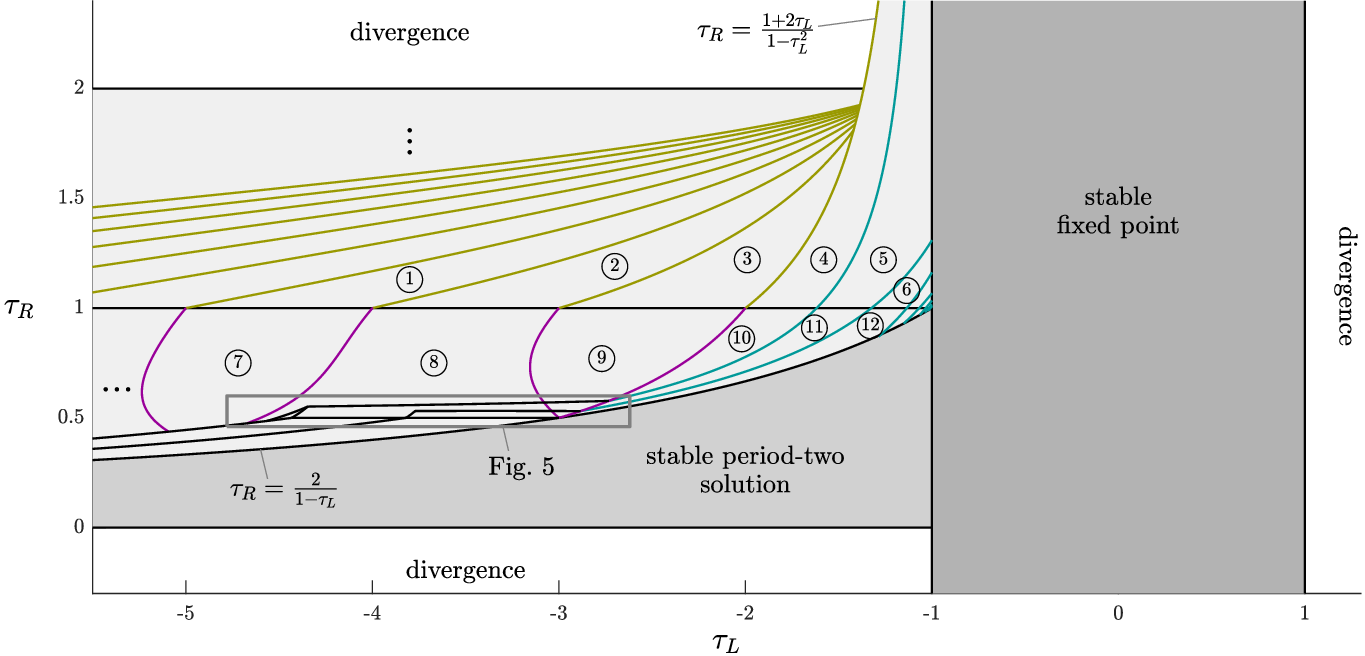}}
\put(0,0){\includegraphics[height=5.2cm]{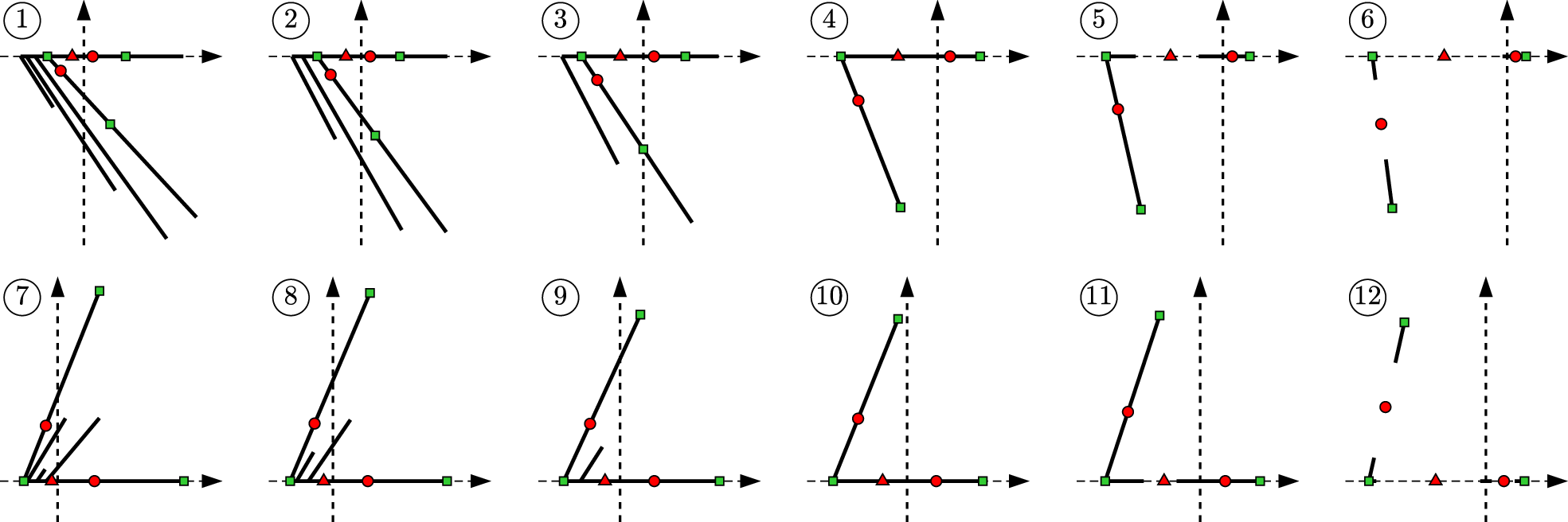}}
\end{picture}
\caption{
A two-parameter bifurcation diagram of the family \eqref{eq:reducedbcnf} and representative phase portraits.
For the twelve indicated regions we provide one phase portrait in $(x,y)$-coordinates
that shows the attractor (black),
the unstable fixed point \eqref{eq:fp} (red triangle),
an unstable period-two solution (red circles),
and the three points $U^{(1)}$, $U^{(2)}$, and $U^{(3)}$, given by \eqref{eq:threeImages} (green squares).
\label{fig:mapBifSetA}
} 
\end{center}
\end{figure}

\begin{figure}[b!]
\begin{center}
\setlength{\unitlength}{1cm}
\begin{picture}(13.6,10.6)
\put(0,5.6){\includegraphics[height=5cm]{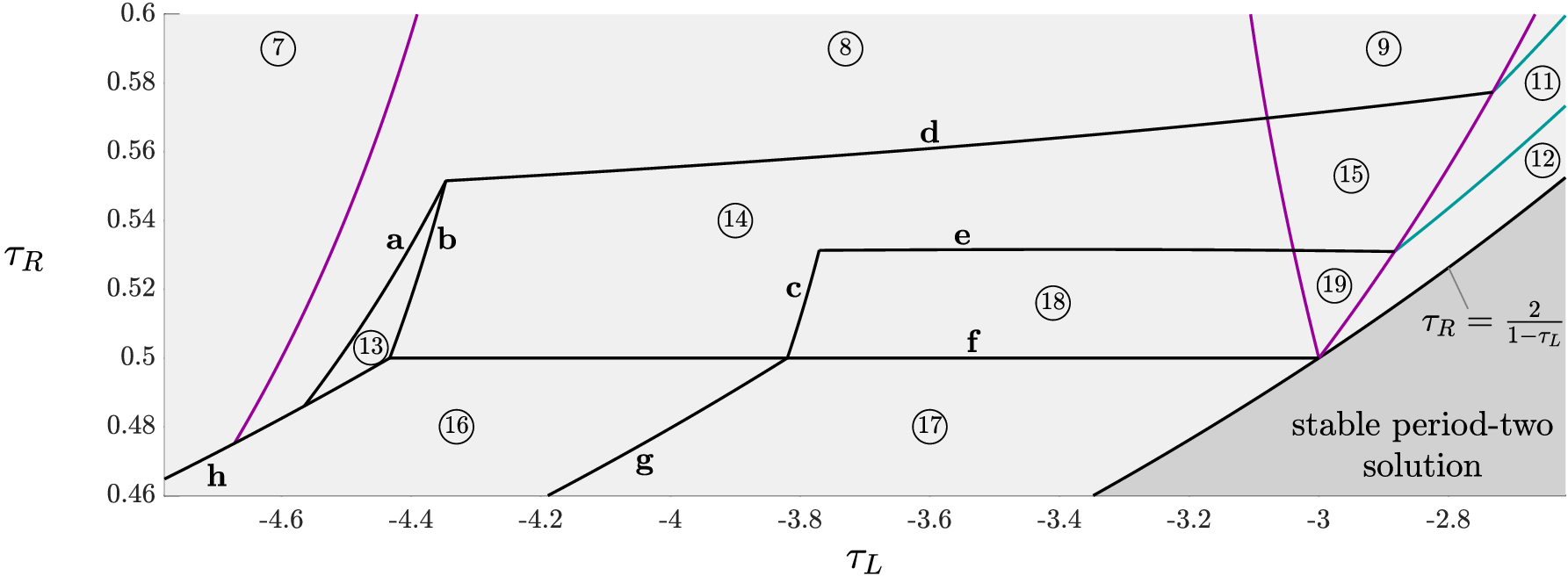}}
\put(0,0){\includegraphics[height=5.2cm]{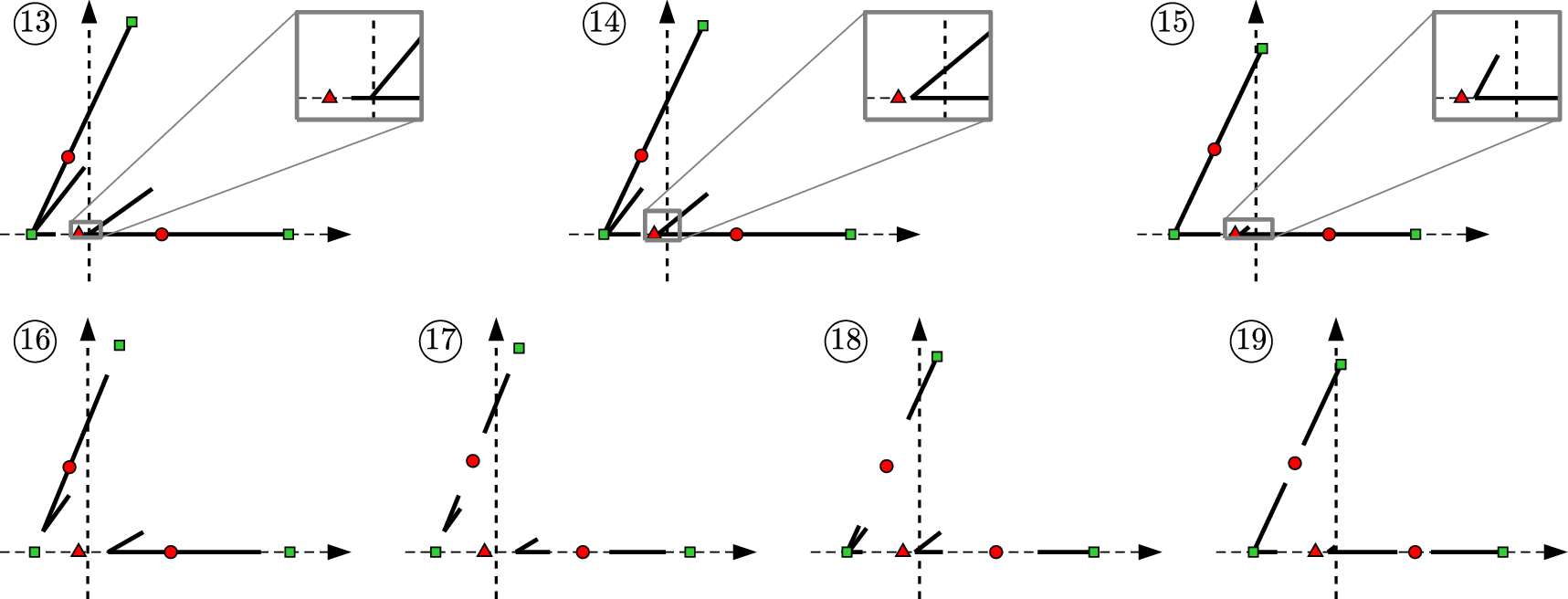}}
\end{picture}
\caption{
A magnification of Fig.~\ref{fig:mapBifSetA} and additional sample phase portraits.
The bifurcation curves {\bf a}--{\bf h} are described in \S\ref{sub:abc}.
\label{fig:mapBifSetB}
} 
\end{center}
\end{figure}

The division of the $(\tau_L,\tau_R)$-plane is shown in Fig.~\ref{fig:mapBifSetA},
and with a magnification in Fig.~\ref{fig:mapBifSetB}.
There are basic four scenarios (ignoring points on bifurcation curves).
First, if $-1 < \tau_L < 1$, then \eqref{eq:reducedbcnf} has the stable fixed point
\begin{equation}
P^* = \left( \tfrac{1}{\tau_L - 1}, 0 \right),
\label{eq:fp}
\end{equation}
belonging to the left half-plane.
Second, if $\tau_L < -1$ and $0 < \tau_R < \frac{2}{1 - \tau_L}$,
then \eqref{eq:reducedbcnf} has a stable period-two solution.
This solution is comprised of the points
\begin{align}
W_L &= \left( \tfrac{2}{\tau_L - 1}, -\tfrac{(\tau_L + 1)(\tau_R - 1)}{\tau_R (\tau_L - 1)} \right), &
W_R &= \left( \tfrac{\tau_L + 1}{\tau_R (\tau_L - 1)}, 0 \right),
\label{eq:WLWR}
\end{align}
where $W_L$ lies in the left half-plane,
and $W_R$ lies in the right half-plane.
Third, in the light grey area above the curve $\tau_R = \frac{2}{1 - \tau_L}$,
the attractor of \eqref{eq:reducedbcnf} is chaotic.
Elsewhere, \eqref{eq:reducedbcnf} has no attractor
and typical forward orbits diverge.

In the remainder of this section we consider the light grey area.
We have divided this area into regions according to the geometry of the attractor.
For $19$ of these regions, we provide in Figs.~\ref{fig:mapBifSetA} and \ref{fig:mapBifSetB}
a sample plot of the attractor.
In these plots, $P^*$ and $\left\{ W_L, W_R \right\}$ are unstable,
and indicated with a red triangle and red circles respectively.

If $\tau_R > 1$, then $g_R$ is orientation-preserving
and maps points with $x \ge 0$ to points with $y \le 0$,
while if $\tau_R < 1$, then $g_R$ is orientation-reversing
and maps points with $x \ge 0$ to points with $y \ge 0$.
Since $\delta_L = 0$, $g_L$ maps all points to the $x$-axis.
Consequently, the attractor contains a subset of the $x$-axis,
and is otherwise situated entirely below the $x$-axis if $\tau_R > 1$,
and above the $x$-axis if $\tau_R < 1$.

The attractor is always a finite union of line segments
(proved in some cases by Kowalcyzk \cite{Ko05}).
The simplest case is that the attractor is a union of two line segments, as in regions 4 and 10.
The endpoints of the line segments are the first three images of the origin:
\begin{equation}
\begin{split}
U^{(1)} &= g_L(0,0) = (-1,0), \\
U^{(2)} &= g_L^2(0,0) = (-\tau_L-1,0), \\
U^{(3)} &= \left( g_R \circ g_L^2 \right)(0,0) = \big( -\tau_R (\tau_L + 1) - 1, (\tau_R - 1)(\tau_L + 1) \big).
\end{split}
\label{eq:threeImages}
\end{equation}
These points are indicated by green squares in each of the 19 phase portraits,
and provide a useful sense of scale.

\subsection{Three bifurcation sequences}
\label{sub:threeBifSequences}

Fig.~\ref{fig:mapBifSetA} contains three sequences of bifurcation curves
coloured turquoise, olive, and purple.
As we cross a turquoise curve from left to right, the attractor splits into twice as many connected components.
So in regions 5 and 11, the attractor has two connected components,
while in regions 6 and 12, it has four connected components.
There are infinitely many of these curves converging to $(\tau_L,\tau_R) = (-1,1)$.
The first curve is where $g_L \left( U^{(3)} \right) = P^*$.
Here orbits in the attractor lose access to a neighbourhood of $P^*$,
so the attractor develops a hole centred at $P^*$.
As we pass through the curve, the size of the hole increases from zero in a continuous fashion,
so the attractor remains continuous with respect to Hausdorff metric \cite{GlSi20b}.
The remaining turquoise curves were constructed by renormalisation \cite{GhMc24,GhSi22}.

As we cross an olive curve from right to left, the attractor gains a limb (line segment).
There are infinitely many of these curves converging monotonically to $\tau_R = 2$.
The first ten of these curves are included in Fig.~\ref{fig:mapBifSetA}.
The first curve is where $g_L \left( U^{(3)} \right) = U^{(1)}$.
Here the attractor suddenly increases in size by gaining access to points left of $U^{(1)}$,
so is discontinuous with respect to Hausdorff metric.
For this reason, in region 3 the points $U^{(1)}$, $U^{(2)}$, and $U^{(3)}$ are now interior points
of the line segments that comprise attractor.
As we move further to the left (and/or upwards), each olive curve
is where one endpoint of the left-most limb of the attractor maps to the other endpoint of this limb,
and as we pass through the curve the attractor accumulates a new limb
by gaining access to more points on the $x$-axis and increasing in size discontinuously.

The purple curves form a similar sequence below $\tau_R = 1$.
The right-most purple curve is where $U^{(3)}$ lies on the switching line.
As we cross this curve the attractor gains a limb in a continuous fashion with respect to Hausdorff metric.
As we move left into region 8, then region 7, and so on, the attractor grows extra limbs.
The upper endpoints of these limbs are the points
$\left( g_R^k \circ g_L^2 \right)(0,0)$, with $k \ge 1$.
The purple curves are where these points lie on the switching line.

If the line segment with endpoints $U^{(1)}$ and $U^{(3)}$ intersects the switching line,
the intersection occurs at the point $\left( 0, \frac{1}{\tau_R} - 1 \right)$.
The image of this point under the map is
\begin{equation}
V = \left( \tfrac{1}{\tau_R} - 2, 0 \right),
\label{eq:V}
\end{equation}
which is a point that will significant below.
For the attractor in regions 7, 8, and 9,
$V$ is the left endpoint of the right-most limb of the attractor.

\subsection{Additional bifurcation curves}
\label{sub:abc}

Now refer to the magnification of Fig.~\ref{fig:mapBifSetA} shown in Fig.~\ref{fig:mapBifSetB}.
In regions 13--19 the attractor has more exotic geometries.
For example, in region 17 the attractor has four connected components.
Two components are line segments,
and two components are the union of two line segments.
The endpoints of the (six) line segments are
the zero-th through ninth images of $V$ under the map.

The bifurcation curves {\bf a}--{\bf h} in Fig.~\ref{fig:mapBifSetB}
are characterised by the following equations:
\begin{enumerate}
\renewcommand{\labelenumi}{{\bf \alph{enumi}}:}
\setlength{\itemsep}{0pt}
\item
$\left( g_L \circ g_R^2 \right) \left( U^{(3)} \right) = P^*$,
\item
$\left( g_L \circ g_R^2 \right) \left( U^{(3)} \right) = V$,
\item
$\left( g_L \circ g_R^2 \right) \left( U^{(3)} \right) = W_R$,
\item
$V = P^*$,
\item
$g_L^2(V) = W_R$,
\item
$V = (0,0)$,
\item
$\left( g_L \circ g_R^3 \circ g_L \circ g_R \right)(V) = W_R$,
\item
$\left( g_L \circ g_R^3 \circ g_L \circ g_R \right)(V) = V$.
\end{enumerate}
We do not have space to explain
how these constraints cause the attractor to change geometry.
In short, {\bf d}, {\bf a}, and part of {\bf h} effectively extend the boundary between regions 10 and 11
where the attractor splits into two components.
Similarly, {\bf e}, {\bf c}, and {\bf g} effectively extend the boundary between regions 11 and 12
where the attractor splits into four components.
As we go downwards through {\bf f},
the point $V$, which is a vertex of the attractor, passes through the origin.
Consequently, below {\bf f} the origin no longer belongs to the attractor.
For this reason, in regions 16 and 17 the attractor has become separated from
the first three iterates $U^{(1)}$, $U^{(2)}$, and $U^{(3)}$ of the origin.

\section{Unfolding boundary Hopf bifurcations in mathematical models}
\label{sec:appl}

In this section we study three examples.
For each example we numerically continue curves of one-parameter bifurcations,
and study the nature of the attractors near a boundary Hopf bifurcation.
We then evaluate the limiting parameter values of Theorem \ref{th:main}
and compare the theory of \S\ref{sec:bcnf} to the numerical observations.
We also consider the full curve of grazing-sliding bifurcations,
and determine numerically how the corresponding parameter values of border-collision normal form
vary along this curve.
By studying the dynamics of the normal form,
we identify codimension-two points on the grazing-sliding bifurcation curve
where the dynamics created in the bifurcation changes,
and other codimension-one bifurcations arise.

\subsection{A minimal example}

Consider the system
\begin{equation}
\dot{X} = \begin{cases}
\begin{bmatrix} 0 \\ -2 \\ 1 \end{bmatrix}, & \nu - X_1 - 2 X_2 - 3 X_3 < 0, \\
\begin{bmatrix} X_2 \\ -X_1 + \eta X_2 - X_2^3 \\ -\frac{1}{5} X_3 \end{bmatrix}, & \nu - X_1 - 2 X_2 - 3 X_3 > 0,
\end{cases}
\label{eq:toyF}
\end{equation}
where $\eta, \nu \in \mathbb{R}$ are parameters.
This system has the general form \eqref{eq:F} with
\begin{equation}
\begin{split}
F_L(X;\nu,\eta) &= \begin{bmatrix} 0 \\ -2 \\ 1 \end{bmatrix}, \\
F_R(X;\nu,\eta) &= \begin{bmatrix} X_2 \\ -X_1 + \eta X_2 - X_2^3 \\ -\frac{1}{5} X_3 \end{bmatrix}, \\
H(X;\nu,\eta) &= \nu - X_1 - 2 X_2 - 3 X_3 \,.
\end{split}
\nonumber
\end{equation}
We now show that \eqref{eq:toyF} satisfies the conditions of Theorem \ref{th:main}
with equilibrium $X^*(\nu,\eta) = \bO$,
and discontinuity surface
$\Sigma_{\nu,\eta} = \left\{ X \in \mathbb{R}^3 \,\middle|\, X_1 = \nu - 2 X_2 - 3 X_3 \right\}$.

The boundary equilibrium bifurcation condition \eqref{eq:aBEB} holds
because $X^*(\nu,\eta) \in \Sigma_{\nu,\eta}$ exactly when $\nu = 0$.
The Hopf bifurcation condition \eqref{eq:aHB} holds
because $X^*(\nu,\eta)$ has purely imaginary eigenvalues exactly when $\eta = 0$.
The quantities \eqref{eq:udMR} have the values
\begin{align}
u &= \begin{bmatrix} -1 \\ -2 \\ -3 \end{bmatrix}, &
d &= \begin{bmatrix} 0 \\ -2 \\ 1 \end{bmatrix}, &
M_R &= \begin{bmatrix} 0 & 1 & 0 \\ -1 & 0 & 0 \\ 0 & 0 & -\frac{1}{5} \end{bmatrix},
\end{align}
so $\beta_0 = 1$ and $\gamma_0 = -\frac{1}{5}$
because $M_R$ has eigenvalues $\pm \ri$ and $-\frac{1}{5}$.
Normalised left and right eigenvectors for the eigenvalue $-\frac{1}{5}$
are $w = v = e_3$ (the third standard basis vector of $\mathbb{R}^3$).
Thus $u$ and $w$ are linearly independent, i.e.~condition \eqref{eq:auw} holds.
Notice $u^{\sf T} d = 1 > 0$,
so the system has an attracting region.
Also $u^{\sf T} M_R^{-1} d = 13 > 0$,
and the transversality condition \eqref{eq:anu} holds,
thus for sufficiently small values of $\eta$
the boundary equilibrium bifurcations along $\nu = 0$ are nonsmooth-folds.
Thus for $\nu > 0$, the equilibrium coexists with a pseudo-equilibrium.
Both equilibria are visible in the phase portrait, Fig.~\ref{fig:toy1}(i).

\begin{figure}[b!]
\begin{center}
\includegraphics[width=15.6cm]{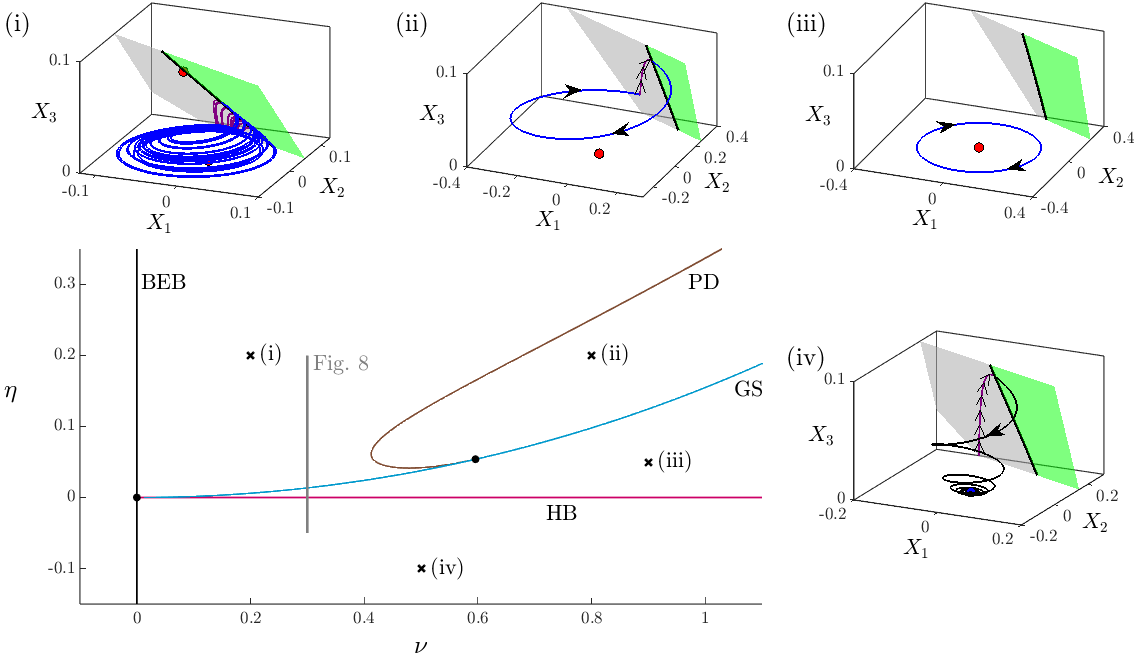}
\caption{
A two-parameter bifurcation diagram and representative phase portraits of \eqref{eq:toyF}
(BEB: nonsmooth-fold boundary equilibrium bifurcation;
HB: supercritical Hopf bifurcation;
GS: grazing-sliding bifurcation;
PD: period-doubling bifurcation of a limit cycle with one sliding segment).
The phase portraits use the following parameter values:
(i) $(\nu,\eta) = (0.2,0.2)$;
(ii) $(\nu,\eta) = (0.8,0.2)$;
(iii) $(\nu,\eta) = (0.9,0.05)$;
(iv) $(\nu,\eta) = (0.5,-0.1)$.
In the each phase portrait the equilibrium (circle) at the origin is coloured blue
if it is stable and red if it is unstable.
The unstable pseudo-equilibrium (red circle) is only visible in panel (i).
\label{fig:toy1}
} 
\end{center}
\end{figure}

The variables $X_1$ and $X_2$ of the right piece $\dot{X} = F_R(X;\nu,\eta)$
are decoupled from $X_3$ and evolve according to the van der Pol system \cite{Va26}.
The Hopf bifurcation of this system satisfies
the transversality condition \eqref{eq:aeta} and has $\chi_{\rm HB} < 0$, see \cite{Si22b},
so the bifurcation generates a stable limit cycle.
This limit cycle grazes the discontinuity surface
along the curve GS in Fig.~\ref{fig:toy1}, which was computed by numerical continuation.

\begin{figure}[b!]
\begin{center}
\includegraphics[width=15.6cm]{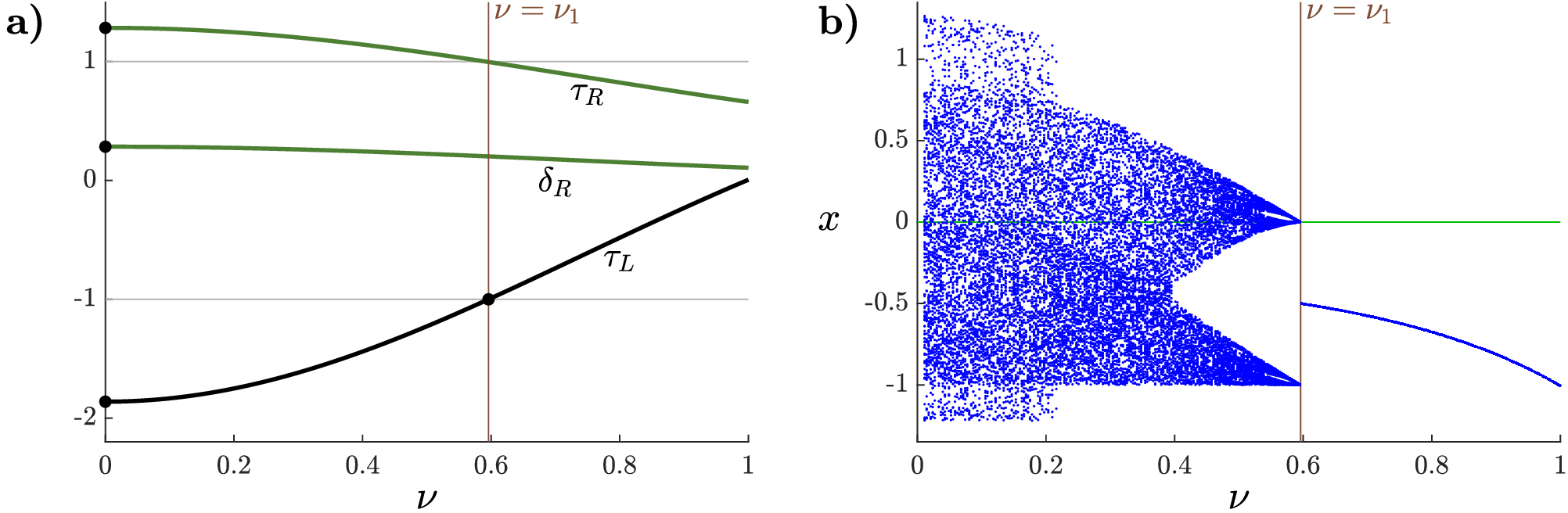}
\caption{
Panel (a) shows the values of
$\tau_L$, $\tau_R$, and $\delta_R$ for the grazing-sliding bifurcation curve GS of Fig.~\ref{fig:toy1}.
The black dots at $\nu = 0$ indicate the values \eqref{eq:tL0}--\eqref{eq:dR0}.
Panel (b) is a bifurcation diagram of
the two-dimensional border-collision normal form \eqref{eq:bcnf}
with $\mu = -1$, $\delta_L = 0$, and the values in panel (a)
(for each value of $\nu$ we computed $10^4$ iterates of $(x,y) = (0,0)$
and plotted the last $100$ values of $x$).
\label{fig:toy2}
} 
\end{center}
\end{figure}

The local dynamics created along this curve
are captured by the two-dimensional border-collision normal form \eqref{eq:bcnf}.
Fig.~\ref{fig:toy2}(a) shows how the parameter values of \eqref{eq:bcnf}
vary along the curve (also $\delta_L = 0$).
These values were obtained by using finite difference approximations
to evaluate the derivative of the global Poincar\'e map $Q_{\rm global}$,
and by using the formula \eqref{eq:8p74} to evaluate the derivative of the discontinuity map $Q_{{\rm disc}, L}$.
The parameter values of \eqref{eq:bcnf} are then taken
to be the traces and determinants of
$\rD Q_{\rm global}(Q_{{\rm disc},L}) \rD Q_{{\rm disc},L}$ and $\rD Q_{\rm global}$.

The black dots in Fig.~\ref{fig:toy2}(a) at $\nu = 0$ are the values
given the formulas \eqref{eq:tL0}--\eqref{eq:dR0}, specifically
$\tau_L = -1.8616$, $\tau_R = 1.2846$, and $\delta_R = 0.2846$, to four decimal places.
This corresponds to a point in region 3 of Fig.~\ref{fig:mapBifSetA}.
Therefore, above the curve GS, and with sufficiently small $\nu > 0$,
we expect \eqref{eq:toyF} has a chaotic attractor, as appears to the case
at parameter point (i) of Fig.~\ref{fig:toy1}.

\begin{figure}[b!]
\begin{center}
\includegraphics[width=9cm]{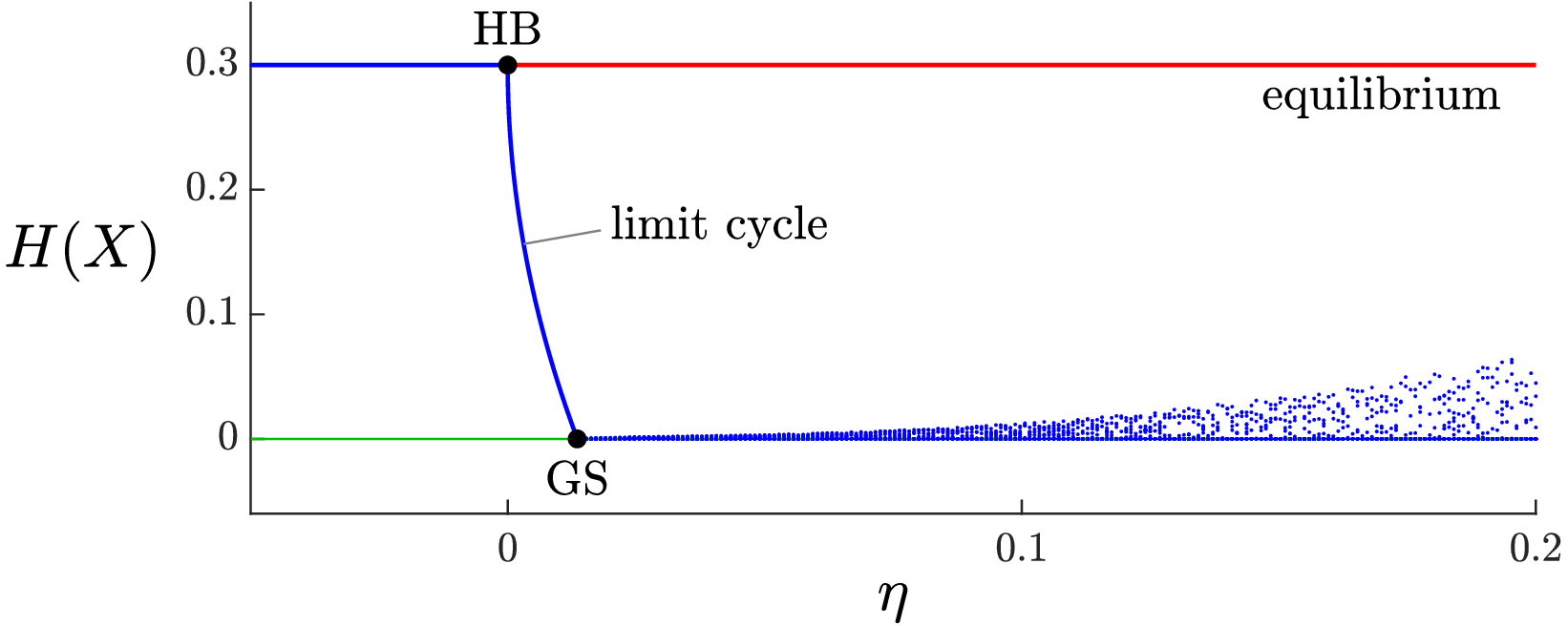}
\caption{
A one-parameter bifurcation diagram of \eqref{eq:toyF} with $\nu = 0.3$
corresponding to the grey line segment in Fig.~\ref{fig:toy1}
(HB: supercritical Hopf bifurcation; GS: grazing-sliding bifurcation).
For the limit cycle and presumably chaotic attractor,
we plot $H$-values at points where these objects intersect the surface
$\Omega = \left\{ X \in \mathbb{R}^3 \,\middle|\, \cL_{F_R} H(X) = 0 \right\}$.
\label{fig:toy3}
} 
\end{center}
\end{figure}

Fig.~\ref{fig:toy3} is a one-parameter bifurcation diagram
showing how the attractor changes as we pass through the Hopf
and grazing-sliding bifurcations when $\nu = 0.3$.
The presumably chaotic attractor created at grazing-sliding
emerges softly out of the grazing limit cycle,
then grows steadily in size as $\eta$ increases further.

To understand the behaviour of the system for larger values of $\nu$,
we show in Fig.~\ref{fig:toy2}(b) a numerically computed bifurcation diagram
of the border-collision normal form with $\mu = -1$ and the parameter values of Fig.~\ref{fig:toy2}(a).
At any typical $\nu$-value, the attractor in Fig.~\ref{fig:toy2}(b)
corresponds to the attractor of \eqref{eq:toyF} created in the grazing-sliding bifurcation.
We conclude that the grazing-sliding bifurcation creates a chaotic attractor
until $\nu_1 \approx 0.5959$, where $\tau_L = -1$.
Beyond this value the grazing-sliding bifurcation creates a stable limit cycle with one sliding segment,
as in Fig.~\ref{fig:toy1}(ii).
With further parameter variation, this limit cycle
loses stability along the period-doubling curve PD
that emanates from the grazing-sliding curve at $\nu = \nu_1$.
There are additional bifurcation curves that we have not computed,
such as further period-doubling bifurcations between parameter points (ii) and (i),
and curves where the topology of the chaotic attractor changes between $\nu = 0$ and $\nu = \nu_1$.

\subsection{Pest control}

Consider a system of the form
\begin{equation}
\dot{X} = F_R(X) =
\begin{bmatrix}
X_1 (1-X_1) - f_1(X_1) X_2 \\
f_1(X_1) X_2 - f_2(X_2) X_3 - d_1 X_2 \\
f_2(X_2) X_3 - d_2 X_3
\end{bmatrix},
\label{eq:appl-FR}
\end{equation}
where $X_1$, $X_2$, and $X_3$ represent the populations of three different species.
This is a general form for a three-species food chain model,
where species $X_3$ feeds on species $X_2$, and species $X_2$ feeds on species $X_1$.
The constants $d_1 > 0$ and $d_2 > 0$ are the death rates of $X_2$ and $X_3$,
and the carrying capacity of $X_1$ has been scaled to $1$.
We assume the functional responses $f_1$ and $f_2$ have the Hollings Type II form
\begin{align}
f_1(X_1) &= \frac{a_1 X_1}{1 + b_1 X_1}, &
f_2(X_2) &= \frac{a_2 X_2}{1 + b_2 X_2}.
\nonumber
\end{align}
With these functions, the system \eqref{eq:appl-FR} was studied by Hastings and Powell \cite{HaPo91}
who discovered that the populations of the species could settle to a chaotic attractor.
Other research groups have studied the system further 
finding Hopf bifurcations, homoclinic bifurcations,
and other dynamical phenomena \cite{AbRo94,KuRi96,McYo95}.

If the three-species system is subject to human management or control,
such as the release of pesticides,
we assume that the populations instead evolve according to $\dot{X} = F_L(X)$, where
\begin{equation}
F_L(X) = F_R(X) - \begin{bmatrix} q_1 X_1 \\ q_2 X_2 \\ q_3 X_3 \end{bmatrix},
\label{eq:appl-FL}
\end{equation}
and $q_1$, $q_2$, and $q_3$ are the killing rates of $X_1$, $X_2$, and $X_3$, respectively.
To reduce costs and lessen detrimental effects such as harm to non-invasive species,
such control is only applied intermittently \cite{Ko98}.
For example, the control may only be applied
when the population of one of the species exceeds a threshold \cite{Jo04}.
If we ignore the time lag between when populations are measured and control is applied,
which may be reasonable if the natural time-scale of the ecological dynamics
is significantly slower than the human response,
the system may be treated as a Filippov system.

Zhou and Tang \cite{ZhTa22}
treat $X_1$, $X_2$, and $X_3$ as the populations of a crop,
a pest, and an enemy of the pest, respectively.
They suppose control is applied when the population of the pest enemy falls
below a threshold $\xi$, as in this case the pest population is in danger of spiking.
That is, the model is a Filippov system \eqref{eq:F},
where $F_L$ and $F_R$ are given by \eqref{eq:appl-FL} and \eqref{eq:appl-FR}, and
\begin{equation}
H(X) = X_3 - \xi.
\label{eq:HZhTa22}
\end{equation}
Here we analyse a boundary Hopf bifurcation in this model
using $b_1$ and $\xi$ as bifurcation parameters.
Following Hastings and Powell \cite{HaPo91}, we fix
\begin{align}
a_1 &= 5, &
a_2 &= 0.1, &
b_2 &= 2, &
d_1 &= 0.4, &
d_2 &= 0.01,
\label{eq:appl-paramBasic}
\end{align}
and as in Zhou and Tang \cite{ZhTa22} we use
\begin{align}
q_1 &= 0, &
q_2 &= 0.05, &
q_3 &= -0.01,
\label{eq:appl-paramZhTa22}
\end{align}
where $q_3 < 0$ because enemies of the pest
are released into the environment during the control phase.

\begin{figure}[b!]
\begin{center}
\includegraphics[width=15.6cm]{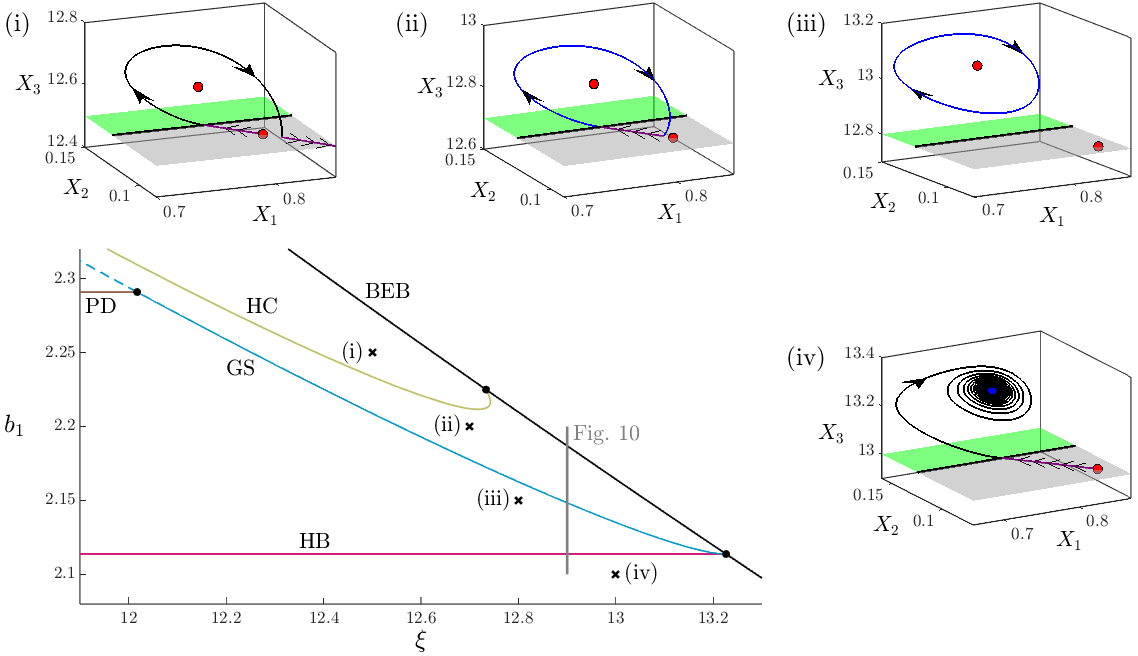}
\caption{
A two-parameter bifurcation diagram and representative phase portraits
of the food chain model with threshold control on the value of $X_3$,
specifically \eqref{eq:F} with \eqref{eq:appl-FR}--\eqref{eq:appl-paramZhTa22}
(BEB: nonsmooth-fold boundary equilibrium bifurcation;
HB: supercritical Hopf bifurcation;
PD: period-doubling bifurcation;
GS: grazing-sliding bifurcation;
HC: homoclinic bifurcation of sliding limit cycle and pseudo-equilibrium).
The curve GS is solid where the grazing limit cycle is stable,
and dashed where it is unstable.
The phase portraits use the same conventions as Fig.~\ref{fig:toy1}
and the parameter values:
(i) $(\xi,b_1) = (12.5,2.25)$;
(ii) $(\xi,b_1) = (12.7,2.2)$;
(iii) $(\xi,b_1) = (12.8,2.15)$;
(iv) $(\xi,b_1) = (13,2.1)$.
\label{fig:ecol5}
} 
\end{center}
\end{figure}

With a relatively small value of $b_1$,
the system without control has a stable equilibrium $X^*(b_1)$, Fig.~\ref{fig:ecol5}(iv).
As the value of $b_1$ is increased, $X^*(b_1)$ loses stability in
a supercritical Hopf bifurcation at $b_1 = b_{1,{\rm HB}} \approx 2.1138$,
creating a stable limit cycle, Fig.~\ref{fig:ecol5}(iii).
As $b_1$ is increased further,
the limit cycle loses stability in a period-doubling bifurcation
at $b_1 = b_{1,{\rm PD}} \approx 2.2909$.
The limit cycle undergoes grazing
along the curve GS in Fig.~\ref{fig:ecol4}.
Also, the equilibrium $X^*(b_1)$
hits the threshold $X_3 = \xi$ along the curve BEB.

The intersection of the Hopf and boundary equilibrium bifurcation curves is a boundary Hopf bifurcation.
This point is located at $(\xi,b_1) = (\xi_3^*,b_{1,{\rm HB}})$,
where $\xi_3^* \approx 13.23$ is the $X_3$-value of the equilibrium at the Hopf bifurcation.
A numerical evaluation of the limits \eqref{eq:tL0}--\eqref{eq:dR0} yields the values
$\tau_L \approx -0.02301$, $\tau_R \approx 1$, and $\delta_R \approx 0$.
As indicated in Fig.~\ref{fig:mapBifSetA},
at these values the border-collision normal form with $\mu = -1$ and $\delta_L = 0$
has a stable fixed point with $x < 0$.
Since the fixed point corresponds to a limit cycle with a sliding segment, as in Fig.~\ref{fig:ecol5}(ii),
we can conclude that as we pass through the grazing-sliding bifurcation curve
at any point sufficiently close to the boundary Hopf bifurcation,
the limit cycle remains stable but gains a sliding segment.

\begin{figure}[b!]
\begin{center}
\includegraphics[width=8cm]{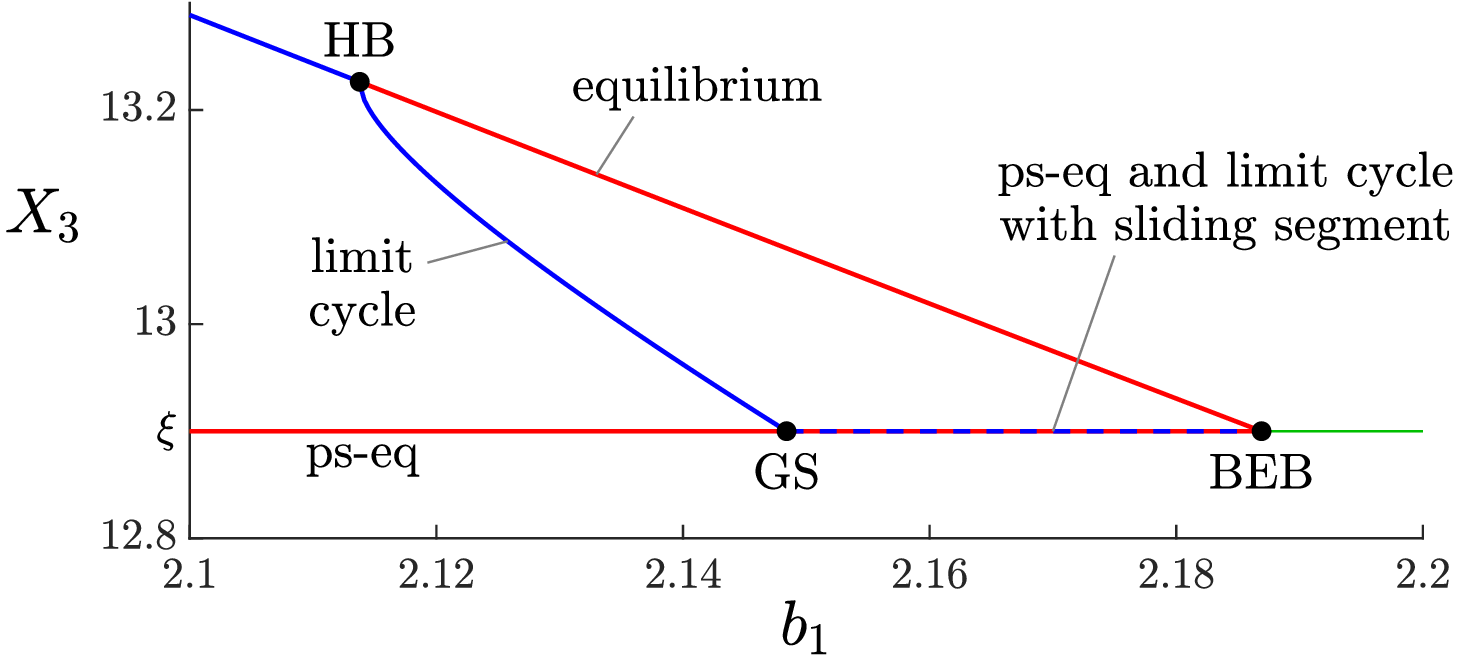}
\caption{
A one-parameter bifurcation diagram of the food chain model with
threshold control on the value of $X_3$ using
the parameter values of Fig.~\ref{fig:ecol5} and $\xi = 12.9$.
\label{fig:ecol7}
} 
\end{center}
\end{figure}

This is seen in the bifurcation diagram, Fig.~\ref{fig:ecol7}.
The limit cycle is born in the Hopf bifurcation,
accrues a sliding segment at the grazing-sliding bifurcation,
then collaspes to a point and is destroyed at the boundary equilibrium bifurcation.

Also, $|\tau_L| < 1$ along the grazing-sliding curve,
thus the system has a stable limit cycle with a sliding segment
immediately above the entirety of grazing-sliding curve.
At larger values of $b_1$,
the limit cycle with a sliding segment is destroyed in a homoclinic
bifurcation where it collides with the pseudo-equilibrium
associated with the boundary equilibrium bifurcation.
Beyond the period-doubling bifurcation the limit cycle is unstable when it undergoes grazing.
It remains for future work to compute the bifurcation curve along which the
period-doubled solution undergoes grazing,
and analogous curves for subsequent period-doubling bifurcations.

In summary, the system without control
has a stable equilibrium at small values of $b_1$,
and a stable limit cycle at intermediate values of $b_1$.
With threshold control, the system has a grazing-sliding bifurcation where
the limit cycle gains a sliding segment corresponding
to the control being applied intermittently.
The limit cycle with a sliding segment is subsequently
destroyed in either a homoclinic bifurcation
or a boundary equilibrium bifurcation.
Beyond these bifurcations typical orbits converge
to a steady-state solution at which the pests have been eliminated ($X_2 = 0$).

\subsection{Population dynamics with a harvesting threshold}

Hamdallah {\em et al.}~\cite{HaAr21} study the same system
but treat $X_1$, $X_2$, and $X_3$ as the populations
of a prey, a middle predator, and a top predator, respectively,
and suppose harvesting is permitted when the prey population exceeds a threshold $\xi$, i.e.
\begin{equation}
H(X) = \xi - X_1 \,.
\label{eq:HHaAr21}
\end{equation}
Here we study this system using again the parameters \eqref{eq:appl-paramBasic}, but now
\begin{align}
q_1 &= 0.09, &
q_2 &= 0.01, &
q_3 &= 0.001,
\label{eq:appl-paramHaAr21}
\end{align}
as in \cite{HaAr21}.
The Hopf and period-doubling bifurcation curves are unchanged,
because these operate independently of the control,
but the system has different boundary equilibrium and grazing-sliding bifurcation curves,
see Fig.~\ref{fig:ecol1}.

\begin{figure}[b!]
\begin{center}
\includegraphics[width=15.6cm]{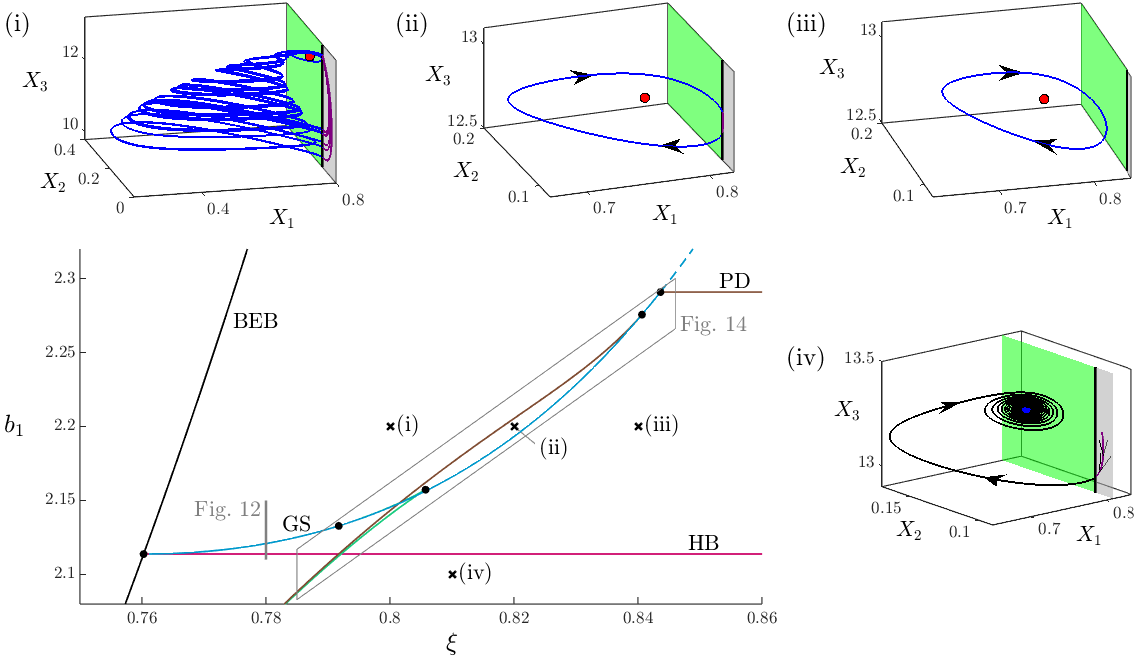}
\caption{
A two-parameter bifurcation diagram and representative phase portraits
of the food chain model with threshold control on the value of $X_1$,
specifically \eqref{eq:F} with \eqref{eq:appl-FR}, \eqref{eq:appl-FL},
\eqref{eq:appl-paramBasic}, \eqref{eq:HHaAr21}, and \eqref{eq:appl-paramHaAr21}
(BEB: nonsmooth-fold boundary equilibrium bifurcation;
HB: supercritical Hopf bifurcation;
PD: period-doubling bifurcation;
GS: grazing-sliding bifurcation).
The curve GS is solid where the grazing limit cycle is stable,
and dashed where it is unstable.
The phase portraits use the same conventions as Fig.~\ref{fig:toy1}
and the parameter values:
(i) $(\xi,b_1) = (0.8,2.2)$;
(ii) $(\xi,b_1) = (0.82,2.2)$;
(iii) $(\xi,b_1) = (0.84,2.2)$;
(iv) $(\xi,b_1) = (0.81,2.1)$.
\label{fig:ecol1}
} 
\end{center}
\end{figure}

The Hopf and boundary equilibrium bifurcation curves 
intersect at $(\xi,b_1) = (\xi_1^*,b_{1,{\rm HB}})$,
where $\xi_1^* \approx 0.7603$ is the $X_1$-value
of the equilibrium of \eqref{eq:appl-FR} at the Hopf bifurcation.
The limiting values \eqref{eq:tL0}--\eqref{eq:dR0}
are $\tau_L \approx 1.541$, $\tau_R \approx 1$, and $\delta_R \approx 0$,
which corresponds to a point in Fig.~\ref{fig:mapBifSetA} where
the border-collision normal form has no attractor.
Thus, near the boundary Hopf bifurcation,
the grazing-sliding bifurcation generates no local attractor.
Here the stable limit cycle is destroyed
and typical orbits are ejected into a different area of phase space
where they often converge to an apparently chaotic attractor, as in Fig.~\ref{fig:ecol1}(i).

\begin{figure}[b!]
\begin{center}
\includegraphics[width=8cm]{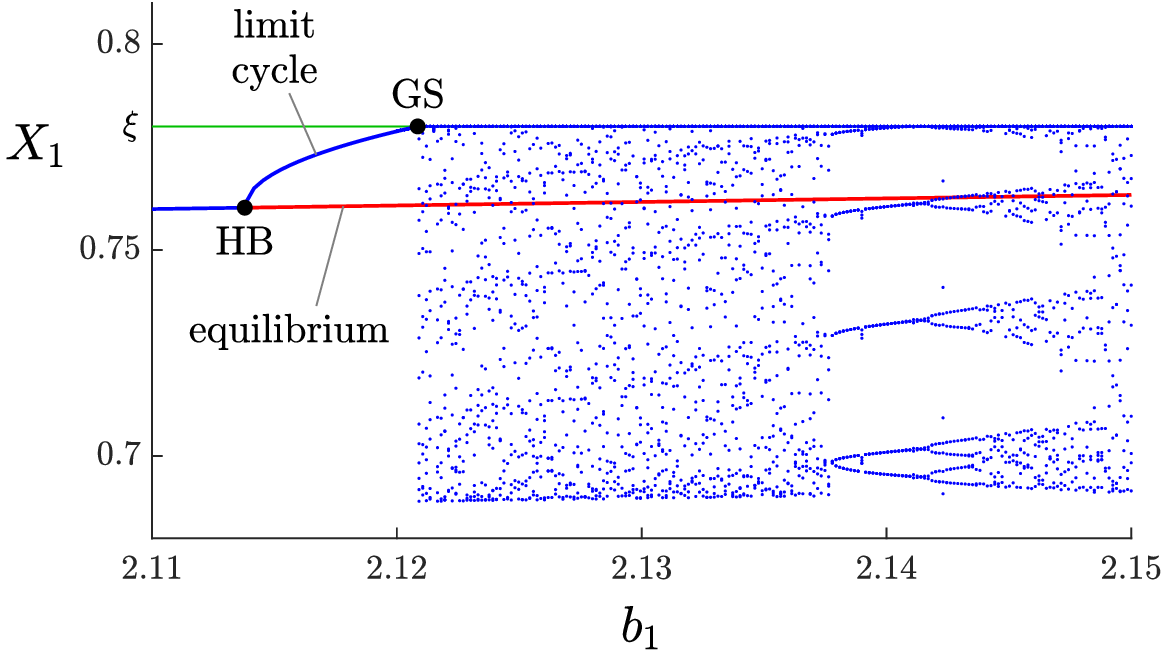}
\caption{
A one-parameter bifurcation diagram of the food chain model with
threshold control on the value of $X_1$
using the parameter values of Fig.~\ref{fig:ecol1} and $\xi = 0.78$.
\label{fig:ecol4}
} 
\end{center}
\end{figure}

Fig.~\ref{fig:ecol4} shows a typical example of this transition.
At the grazing-sliding bifurcation the attractor
jumps from a limit cycle to a large amplitude chaotic attractor.
This is distinct from the transition in
Fig.~\ref{fig:toy3} where the chaotic attractor grows softly out of the limit cycle.

\begin{figure}[b!]
\begin{center}
\includegraphics[width=7.15cm]{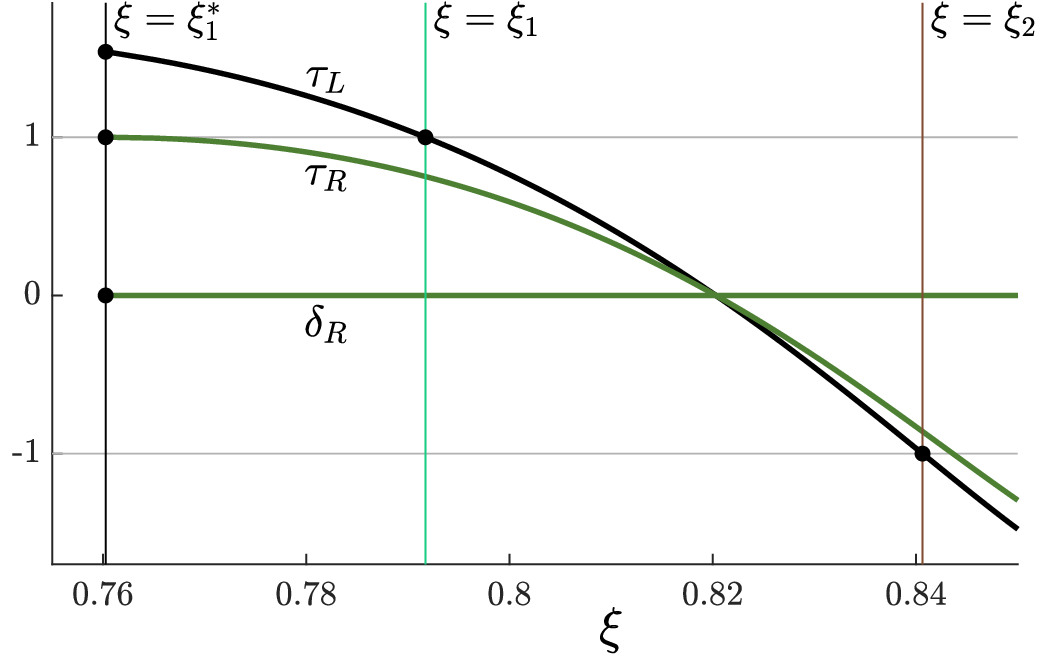}
\caption{
Numerically computed parameter values of
$\tau_L$, $\tau_R$, and $\delta_R$ for the grazing-sliding bifurcation curve of Fig.~\ref{fig:ecol1}.
The black dots at $\xi = \xi_1^* \approx 0.7603$ indicate the values \eqref{eq:tL0}--\eqref{eq:dR0}.
\label{fig:ecol3}
} 
\end{center}
\end{figure}

For the grazing-sliding bifurcation curve of Fig.~\ref{fig:ecol1},
the parameter values of the corresponding border-collision normal form
are plotted in Fig.~\ref{fig:ecol3} (also $\delta_L = 0$).
This reveals the occurrence of codimension-two points at $\xi = \xi_1 \approx 0.7917$
and $\xi = \xi_2 \approx 0.8406$ where $\tau_L = 1$ and $\tau_L = -1$, respectively.
Between these points the border-collision normal form with $\mu = -1$
has a stable fixed point with $x < 0$.
Thus, between these points,
the stable limit cycle gains a sliding segment at the grazing-sliding bifurcation.

\begin{figure}[b!]
\begin{center}
\includegraphics[width=12cm]{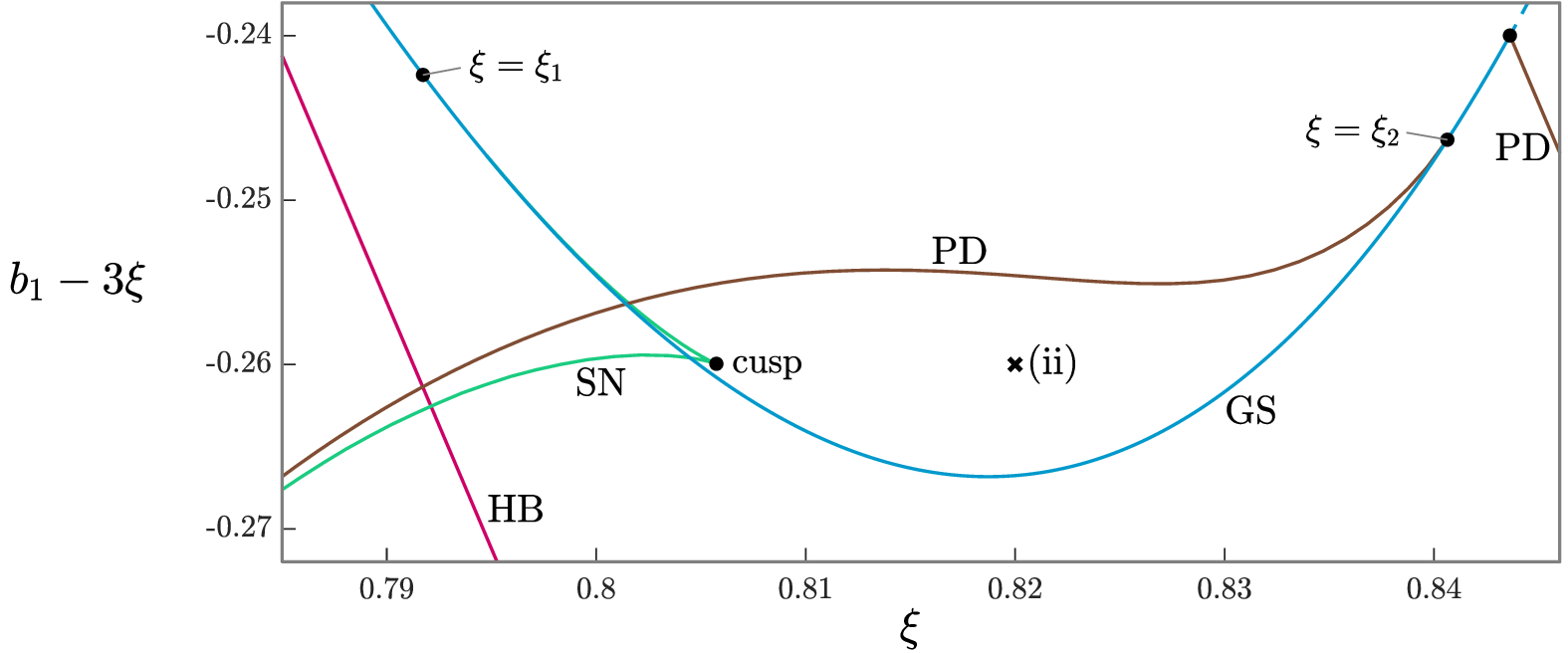}
\caption{
A magnified view of Fig.~\ref{fig:ecol1}.
\label{fig:ecol2}
} 
\end{center}
\end{figure}

From the codimension-two points at $\xi = \xi_1$ and $\xi = \xi_2$,
there issue curves of saddle-node and period-doubling bifurcations.
These are seen more clearly in the magnification, Fig.~\ref{fig:ecol2}.
The saddle-node curve is initially very close to the grazing-sliding curve
because these curves are cubically tangent at $\xi = \xi_1$
by a result of Nordmark and Kowalcyzk \cite{NoKo06}.
This curve experiences a cusp bifurcation at $\xi \approx 0.8057$ and abruptly changes direction.

In summary, the addition of threshold control,
but now on the value of $X_1$,
has introduced a curve of grazing-sliding bifurcations
at which the stable limit cycle either jumps to a large amplitude chaotic attractor,
or gains a sliding segment.
The limit cycle with a sliding segment loses stability along a curve of period-doubling bifurcations.
It remains to identify further bifurcations
to explain how the period-doubled solution transitions to a chaotic attractor.

\section{Summary and discussion}
\label{sec:conc}

We have shown that boundary Hopf bifurcations in three-dimensional Filippov systems
correspond to pairs $(\tau_L,\tau_R)$
that govern the nature of the attractor created along the curve of grazing-sliding bifurcations.
Three examples were explored in \S\ref{sec:appl}.
The first example has $(\tau_L,\tau_R) \approx (-1.86,1.28)$,
corresponding to a chaotic attractor with motion near the grazing limit cycle, Fig.~\ref{fig:toy1}(i).
The second example has $(\tau_L,\tau_R) \approx (-0.02,1)$,
corresponding to a stable limit cycle with a segment of sliding motion, Fig.~\ref{fig:ecol5}(ii).
The third example has $(\tau_L,\tau_R) \approx (1.541,1)$,
corresponding to the absence of an attractor.
In this case, as we pass through the grazing-sliding bifurcation
typical orbits near the grazing limit cycle are ejected to a different area of phase space
and converge to a chaotic attractor far from the grazing limit cycle, Fig.~\ref{fig:ecol1}(i).

The second and third examples are models of a three-species food chain with threshold control.
For these models we have fixed all but two of the parameter values
and analysed the dynamics near boundary Hopf bifurcations.
For a broader bifurcation analysis of the models
refer to the original papers \cite{HaAr21,ZhTa22}.

There are many avenues for future work.
For instance, it should be possible to perform a similar characterisation of the dynamics
created along the boundary equilibrium bifurcation curve.
In generic situations these dynamics are captured by the three-dimensional
boundary equilibrium bifurcation normal form which has five parameters
in addition to a parameter $\mu$ that can be scaled to $\pm 1$ \cite{Si18d}.
What subfamily of this normal form is relevant near boundary Hopf bifurcations?

It remains to construct rigorous proofs for the division of the $(\tau_L,\tau_R)$-plane
indicated in Figs.~\ref{fig:mapBifSetA} and \ref{fig:mapBifSetB}.
We expect this can be done by combining global arguments
with calculations for the amount by which line segments
expand when mapped under the normal form.
If it can be shown that the normal form is {\em locally eventually onto}
on a disjoint union of line segments $\mathcal{A}$,
then the normal form is chaotic in sense of Devaney on $\mathcal{A}$,
and no subset of $\mathcal{A}$ is an attractor \cite{GlJe19,Ru17}.
Already results of this type have been obtained by Kowalcyzk \cite{Ko05}.

Other codimension-two phenomena involving grazing-sliding bifurcations remain to be unfolded,
such as cascades of grazing-sliding bifurcations
resulting from boundary equilibrium bifurcations
that involve homoclinic or heteroclinic connections \cite[pg.~373]{DiBu08}.

As a final remark, our derivation in Appendix \ref{app:formula}
of the linear term of the discontinuity map associated with grazing-sliding bifurcations
involved the computation of a {\em virtual counterpart}, which we believe is a new methodology.
By computing the differences between the virtual counterpart
and the start and end points of the discontinuity map,
rather than computing these points directly,
it was sufficient to retain only the leading-order terms at every step in our asymptotic calculations.
We expect that this methodology can greatly reduce the complexity of
discontinuity map computations for other sliding bifurcations,
especially adding-sliding bifurcations for which
a brute-force approach requires retaining terms to fourth order,
only to have all terms at first, second, and third order
vanish in the final result \cite{DiBu08,DiKo02}.

\section*{Acknowledgements}

This work was supported by Marsden Fund contract MAU2209 managed by Royal Society Te Ap\={a}rangi.
The author thanks Isaac Abbott for assistance with the numerical explorations in \S\ref{sec:bcnf}.

\appendix

\section{The derivative of the global map}
\label{app:global}

Here we derive the formula \eqref{eq:DQglobal} for the derivative of $Q_{\rm global}$
at the grazing point and in the limit $\nu \to 0$.
We consider the initial point \eqref{eq:perturbedPoint},
and evolve under $\dot{Y} = \tilde{F}_R(Y;0)$ under returning to $\Omega_0$.
By \eqref{eq:tildeFR2}, $\tilde{F}_R(Y;0) = \tilde{M}_R Y$,
where $\tilde{M}_R$ is given by \eqref{eq:realJordanForm}.
Thus the orbit is given explicitly by
\begin{equation}
\varphi_t(Y) = \begin{bmatrix}
\cos(\beta_0 t) Y_1 + \sin(\beta_0 t) Y_2 \\
-\sin(\beta_0 t) Y_1 + \cos(\beta_0 t) Y_2 \\
\re^{\gamma_0 t} Y_3
\end{bmatrix},
\label{eq:flow}
\end{equation}
where
\begin{align}
Y_1 &= \frac{-a \psi}{a^2 + b^2} + \ee_1 \,, &
Y_2 &= \frac{-b \psi}{a^2 + b^2} + \frac{b}{a} \,\ee_1 - \frac{c \gamma_0}{a \beta_0}, &
Y_3 &= \ee_3 \,.
\label{eq:perturbedPoint2}
\end{align}
If $\ee_1 = \ee_3 = 0$,
the orbit is periodic and returns to $\Omega_0$ after a time $\frac{2 \pi}{\beta_0}$.
Thus for small $\ee_1, \ee_3 \in \mathbb{R}$ the evolution time associated with $Q_{\rm global}$ is
\begin{equation}
T_{\rm evol}(\ee_1,\ee_3) = \frac{2 \pi}{\beta_0} + k_1 \ee_1 + k_3 \ee_3
+ \cO \left( \left( |\ee_1| + |\ee_3| \right)^2 \right),
\label{eq:Tevol}
\end{equation}
for some $k_1, k_3 \in \mathbb{R}$.
By substituting \eqref{eq:perturbedPoint2} and \eqref{eq:Tevol} into \eqref{eq:flow}, we obtain
\begin{align}
Y_1' &= \frac{-a \psi}{a^2 + b^2} + \left( 1 - \frac{b \beta_0 \psi k_1}{a^2 + b^2} \right) \ee_1
- \frac{b \beta_0 \psi k_3}{a^2 + b^2} \,\ee_3 + \cO \left( \left( |\ee_1| + |\ee_3| \right)^2 \right),
\label{eq:Y1p} \\
Y_2' &= \frac{-b \psi}{a^2 + b^2} + \left( \frac{b}{a} + \frac{a \beta_0 \psi k_1}{a^2 + b^2} \right) \ee_1
+ \left( \frac{a \beta_0 \psi k_3}{a^2 + b^2}- \frac{c \gamma_0}{a \beta_0} \right) \ee_3
+ \cO \left( \left( |\ee_1| + |\ee_3| \right)^2 \right), \label{eq:Y2p} \\
Y_3' &= \re^{\frac{2 \pi \gamma_0}{\beta_0}} \ee_3 
+ \cO \left( \left( |\ee_1| + |\ee_3| \right)^2 \right), \label{eq:Y3p} 
\end{align}
using also $\cos \left( \beta_0 T_{\rm evol}(\ee_1,\ee_3) \right)
= 1 + \cO \left( \left( |\ee_1| + |\ee_3| \right)^2 \right)$ and
$\sin \left( \beta_0 T_{\rm evol}(\ee_1,\ee_3) \right)
= \beta_0 (k_1 \ee_1 + k_3 \ee_3) + \cO \left( \left( |\ee_1| + |\ee_3| \right)^2 \right)$.
The evolution time is such that $Y' \in \Omega_0$,
so by using \eqref{eq:tildevR2} to solve
$\cL_{\tilde{F}_R} \tilde{H}(Y';0) = 0$ we obtain
\begin{align}
k_1 &= 0, &
k_3 &= \frac{c \gamma_0}{\beta_0^2 \psi} \left( 1 - \re^{\frac{2 \pi \gamma_0}{\beta_0}} \right).
\label{eq:k1k3}
\end{align}
By then substituting \eqref{eq:k1k3} into \eqref{eq:Y1p} and \eqref{eq:Y3p},
and extracting the coefficients of the $\ee_1$ and $\ee_3$ terms, we arrive at the desired formula
\begin{equation}
\begin{bmatrix} \frac{\partial Y_1'}{\ee_1} & \frac{\partial Y_1'}{\ee_3} \\[1.2mm]
\frac{\partial Y_3'}{\ee_1} & \frac{\partial Y_3'}{\ee_3} \end{bmatrix}
= \begin{bmatrix} 1 & -\frac{b c \gamma_0}{\left( a^2 + b^2 \right) \beta_0}
\left( 1 - \re^{\frac{2 \pi \gamma_0}{\beta_0}} \right) \\
0 & \re^{\frac{2 \pi \gamma_0}{\beta_0}} \end{bmatrix}.
\nonumber
\end{equation}

\section{An explicit formula for the discontinuity map associated with a grazing-sliding bifurcation}
\label{app:formula}

Consider an $n$-dimensional Filippov system
\begin{equation}
\dot{X} = \begin{cases}
F_L(X), & H(X) < 0, \\
F_R(X), & H(X) > 0,
\end{cases}
\label{eq:gsF}
\end{equation}
with discontinuity surface $\Sigma = \left\{ X \in \mathbb{R}^n \,\middle|\, H(X) = 0 \right\}$,
and suppose $X_{\rm fold} \in \Sigma$ is a visible fold of $F_R$.
That is,
\begin{align}
H(X_{\rm fold}) &= 0, &
\cL_{F_R} H(X_{\rm fold}) &= 0, &
\cL_{F_R}^2 H(X_{\rm fold}) &> 0,
\label{eq:gsA1}
\end{align}
where the third quantity is the second Lie derivative of $H$ with respect to $F_R$.
Also suppose
\begin{equation}
\cL_{F_L} H(X_{\rm fold}) > 0, \\
\label{eq:gsA2}
\end{equation}
so that $\Sigma$ splits into a crossing region and an attracting sliding region in a
neighbourhood of $X_{\rm fold}$.
Consider the cross-section $\Omega = \left\{ X \in \mathbb{R}^n \,\middle|\, \cL_{F_R} H(X) = 0 \right\}$.
For any $X^{(1)} \in \Omega$ with $H \left( X^{(1)} \right) < 0$ sufficiently close to $X_{\rm fold}$,
let $X^{(2)} \in \Sigma$ be the result of following the flow of $F_R$ backwards until reaching $\Sigma$,
and let $X^{(3)} \in \Sigma \cap \Omega$ be the result of sliding forwards until reaching $\Omega$,
see Fig.~\ref{fig:schem2}.
The map $X^{(1)} \to X^{(3)}$ is the non-trivial part of the discontinuity map
associated with a grazing-sliding bifurcation in $n$-dimensional form \cite{DiBu08}.

\begin{figure}[b!]
\begin{center}
\includegraphics[width=5cm]{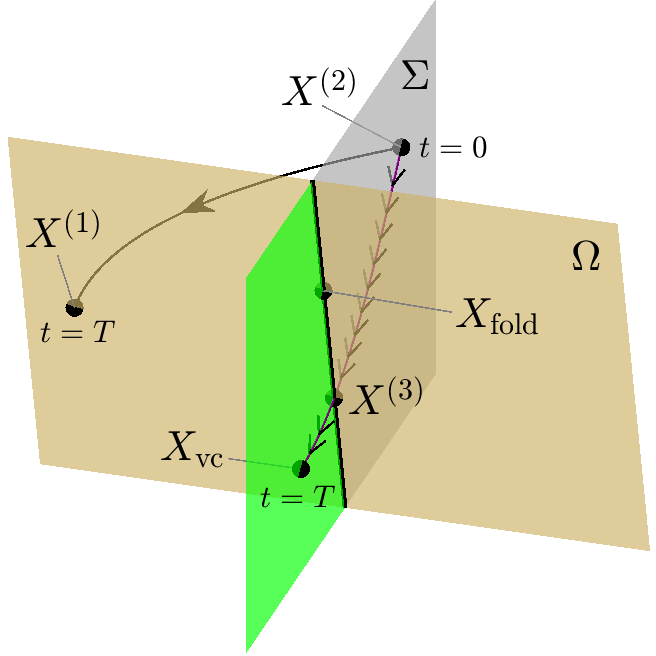}
\caption{
A sketch of the phase space of the $n$-dimensional system \eqref{eq:gsF}
subject to the assumptions in Proposition \ref{pr:gsFormula}.
The point $X^{(2)}$ belongs to $\Sigma$,
and by evolving this point forwards under $F_R$ for a time $T$,
we arrive at the point $X^{(1)} \in \Omega$
(the trajectory from $X^{(1)}$ to $X^{(2)}$ is {\em virtual} because it
is situated on the $H(X) < 0$ side of $\Sigma$).
By evolving $X^{(2)}$ under the sliding vector field for the same length of time $T$,
we arrive at the {\em virtual counterpart} $X_{\rm vc} \in \Sigma$.
\label{fig:schem2}
} 
\end{center}
\end{figure}

\begin{proposition}
If $F_L$ and $F_R$ are $C^2$, $H$ is $C^3$,
and \eqref{eq:gsA1} and \eqref{eq:gsA2} hold, then
\begin{equation}
X^{(3)} = X^{(1)} + \left( \frac{\cL_{F_L} \cL_{F_R} H}
{\big( \cL_{F_L} H \big) \big( \cL_{F_R}^2 H \big)} \,F_R - \frac{1}{\cL_{F_L} H} \,F_L \middle)
\right|_{X = X_{\rm fold}} H \left( X^{(1)} \right)
+ \cO \left( \left\| X^{(1)} - X_{\rm fold} \right\|^{\frac{3}{2}} \right).
\label{eq:gsFormula}
\end{equation}
\label{pr:gsFormula}
\end{proposition}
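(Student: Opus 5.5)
The plan follows the virtual-counterpart idea of Fig.~\ref{fig:schem2}. Set $h = -H(X^{(1)}) > 0$ and note that $\|X^{(1)} - X_{\rm fold}\|$ controls $h$. First I will locate $X^{(2)}$. Flowing backwards from $X^{(1)} \in \Omega$ under $F_R$, the function $H$ along the orbit satisfies $H(\varphi_{-s}(X^{(1)})) = -h + \tfrac12 A s^2 + \cO(s^3)$, where $A = \cL_{F_R}^2 H(X_{\rm fold}) > 0$. The linear term in $s$ vanishes because $X^{(1)} \in \Omega$. Hence the backward time to reach $\Sigma$ is $T = \sqrt{2h/A} + \cO(h)$, which is of order $h^{1/2}$. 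All error terms will be measured in powers of $\rho = \|X^{(1)} - X_{\rm fold}\| + h^{1/2}$, and the target error $\cO(\|X^{(1)} - X_{\rm fold}\|^{3/2})$ corresponds to $\cO(\rho^3)$ in the relevant scaling. Throughout I will compare quantities to leading order only.

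Second, I will define the virtual counterpart $X_{\rm vc}$ as the image of $X^{(2)}$ under the sliding flow $F_S$ for the same time $T$. Subtracting the two evolutions gives
\[
X_{\rm vc} - X^{(1)} = \int_0^T \bigl(F_S - F_R\bigr)\,dt + \cO(T^3),
\]
where the integrand is evaluated along nearby trajectories and the error absorbs the change of base point between them. From \eqref{eq:FS},
\[
F_S - F_R = \frac{\cL_{F_R} H}{\cL_{F_L} H - \cL_{F_R} H}\,(F_R - F_L).
\]
Along the virtual $F_R$ orbit, $\cL_{F_R} H$ grows linearly: it is approximately $A(t - T)$, which is zero at $X^{(1)}$ and $-AT$ at $X^{(2)}$. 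The sliding orbit stays within $\cO(T^2)$ of this orbit, so $\cL_{F_R}H$ agrees there to leading order. Integrating gives
\[
X_{\rm vc} - X^{(1)} \approx -\frac{A T^2}{2\,\cL_{F_L}H}(F_R - F_L) = \frac{H(X^{(1)})}{\cL_{F_L}H}(F_R - F_L),
\]
with all coefficients evaluated at $X_{\rm fold}$ and error $\cO(T^3)$. Here I used $\cL_{F_R}H = 0$ at the fold in the denominator. Because this difference is already $\cO(h)$, freezing the coefficients at $X_{\rm fold}$ costs only a higher-order error.

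Third, I will move from $X_{\rm vc}$ to $X^{(3)}$. The point $X^{(3)}$ is obtained by continuing the sliding flow from $X_{\rm vc}$ until $\cL_{F_R}H = 0$. Since $F_S \approx F_R$ near the fold, the quantity $\cL_{F_S}\cL_{F_R}H$ is approximately $A \ne 0$, so the extra time is $\delta t \approx -\cL_{F_R}H(X_{\rm vc})/A$. This gives $X^{(3)} = X_{\rm vc} + \delta t\, F_R + \cO(h^{3/2})$. To evaluate $\delta t$, expand $\cL_{F_R}H(X_{\rm vc}) \approx \nabla(\cL_{F_R}H)^{\sf T}(X_{\rm vc} - X^{(1)})$, using $\cL_{F_R}H(X^{(1)}) = 0$. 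Substituting the result of the second step, the $F_R$ component contributes $\nabla(\cL_{F_R}H)^{\sf T}F_R = A$ and the $F_L$ component contributes $\cL_{F_L}\cL_{F_R}H$. This yields
\[
\delta t = -\frac{H}{\cL_{F_L}H}\left(1 - \frac{\cL_{F_L}\cL_{F_R}H}{A}\right).
\]
The $-\frac{H}{\cL_{F_L}H}F_R$ part then cancels the $F_R$ term from the second step, leaving exactly the formula \eqref{eq:gsFormula}.

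The main obstacle is the error bookkeeping in the second step. One must justify that the sliding trajectory from $X^{(2)}$ and the virtual $F_R$ trajectory stay within $\cO(T^2)$ of each other. One must also check that the integrand error, which is of order $T$ per unit time, integrates to $\cO(T^3)$, using only the regularity available ($F_L, F_R \in C^2$, $H \in C^3$, so $\cL_{F_R}H$ is $C^2$). I would handle this with a Gronwall estimate on the difference of the two flows over the time interval $[0,T]$. The key point is that $F_S - F_R = \cO(t)$ along the orbit, which makes the accumulated difference of size $\cO(T^2)$.
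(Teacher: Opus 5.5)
Your proposal is correct and follows the paper's own proof essentially step for step. It uses the same virtual counterpart $X_{\rm vc}=\varphi^S_T(X^{(2)})$, the same leading-order difference $X_{\rm vc}-X^{(1)}=\frac{H(X^{(1)})}{\cL_{F_L}H}(F_R-F_L)$, and the same sliding-time correction to reach $\Omega$ in which the $F_R$ terms cancel. The only difference is technical. You obtain $X_{\rm vc}-X^{(1)}$ by integrating the exact identity $F_S-F_R=\frac{\cL_{F_R}H}{\cL_{F_L}H-\cL_{F_R}H}(F_R-F_L)$ with a Gronwall bound, whereas the paper parametrises by $x_2=(\nabla\cL_{F_R}H(\bO))^{\sf T}X^{(2)}$ and subtracts second-order Taylor expansions of the two flows using $\rD F_S(\bO)$. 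One small slip: along the orbit $F_S-F_R$ is $\cO(T)$, not $\cO(t)$, but that still gives the $\cO(T^2)$ bound you need.
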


The coefficient of the $H \left( X^{(1)} \right)$ term
is a linear combination of $F_L$ and $F_R$.
It is in fact the unique linear combination
of $F_L$ and $F_R$ that puts the point $X^{(3)}$ on both $\Sigma$ and $\Omega$, at first order.

\begin{proof}
Without loss of generality we can assume $X_{\rm fold} = \bO$ (the origin).
Since $F_L$ and $F_R$ are $C^2$ and $H$ is $C^3$,
we have the following Taylor expansions about $X = \bO$:
\begin{align}
H(X) &= \nabla H(\bO)^{\sf T} X + \tfrac{1}{2} X^{\sf T} \rD^2 H(\bO) X
+ \cO \left( \| X \|^3 \right), \label{eq:gsH} \\
F_L(X) &= F_L(\bO) + \rD F_L(\bO) X + \cO \left( \| X \|^2 \right), \label{eq:gsFL} \\
F_R(X) &= F_R(\bO) + \rD F_R(\bO) X + \cO \left( \| X \|^2 \right), \label{eq:gsFR}
\end{align}
where we have substituted $H(\bO) = 0$.
We now Taylor expand the sliding vector field
$F_S(X) = \frac{F_R(X) \cL_{F_L} H(X) - F_L(X) \cL_{F_R} H(X)}{\cL_{F_L} H(X) - \cL_{F_R} H(X)}$.
Into this formula we insert
\begin{align}
\cL_{F_L} H(X) &= \cL_{F_L} H(\bO) + \left( \nabla \cL_{F_L} H(\bO) \right)^{\sf T} X + \cO \left( \| X \|^2 \right), \label{eq:gsLieL} \\
\cL_{F_R} H(X) &= \left( \nabla \cL_{F_R} H(\bO) \right)^{\sf T} X + \cO \left( \| X \|^2 \right), \label{eq:gsLieR}
\end{align}
where we have substituted $\cL_{F_R} H(\bO) = 0$, resulting in
\begin{equation}
F_S(X) = F_R(\bO) + \rD F_S(\bO) X + \cO \left( \| X \|^2 \right),
\label{eq:gsFS}
\end{equation}
where
\begin{equation}
\rD F_S(\bO) = \rD F_R(\bO)
+ \frac{(F_R(\bO) - F_L(\bO)) \left( \nabla \cL_{F_R} H(\bO) \right)^{\sf T}}{\cL_{F_L} H(\bO)}.
\label{eq:DFS}
\end{equation}

We now calculate the evolution time $T$ between $X^{(2)}$ and $X^{(1)}$.
Let $\varphi^R_t(X)$ denote the flow induced by $\dot{X} = F_R(X)$,
so $X^{(1)} = \varphi^R_T \left( X^{(2)} \right)$.
By \eqref{eq:gsFR}, the Taylor expansion of the flow about $(X,t) = (\bO,0)$ is
\begin{equation}
\varphi^R_t(X) = X + F_R(\bO) t + \tfrac{1}{2} \rD F_R(\bO) F_R(\bO) t^2
+ \rD F_R(\bO) X t + \cO \left( \left( \| X \| + |t| \right)^3 \right).
\label{eq:gsFlow}
\end{equation}
By substituting this into \eqref{eq:gsH} and \eqref{eq:gsLieR} we obtain
\begin{align}
H \left( \varphi^R_t(X) \right) &= \nabla H(\bO)^{\sf T} X + \tfrac{1}{2} X^{\sf T} \rD^2 H(\bO) X
+ \left( \nabla H(\bO)^{\sf T} \rD F_R(\bO) + F_R(\bO)^{\sf T} \rD^2 H(\bO) \right) X t \nonumber \\
&\quad+ \tfrac{1}{2} \left( \nabla H(\bO)^{\sf T} \rD F_R(\bO) F_R(\bO) + F_R(\bO)^{\sf T} \rD^2 H(\bO) F_R(\bO) \right) t^2
+ \cO \left( \left( \| X \| + |t| \right)^3 \right), \label{eq:gsHFlow} \\
\cL_{F_R} H \left( \varphi^R_t(X) \right) &= \left( \nabla \cL_{F_R} H(\bO) \right)^{\sf T} X
+ \cL_{F_R}^2 H(\bO) t + \cO \left( \left( \| X \| + |t| \right)^2 \right). \label{eq:gsLieRFlow}
\end{align}
Since $X^{(1)} \in \Omega$, we have
$\cL_{F_R} H \left( \varphi^R_T \left( X^{(1)} \right) \right) = 0$,
thus by \eqref{eq:gsLieRFlow},
\begin{equation}
T = -\frac{x_2}{\cL_{F_R}^2 H(\bO)} + \cO \left( \| X^{(2)} \|^2 \right),
\label{eq:gsT}
\end{equation}
where for brevity we write $x_2 = \left( \nabla \cL_{F_R} H(\bO) \right)^{\sf T} X^{(2)}$.
By substituting this into \eqref{eq:gsHFlow}, we obtain
\begin{align}
H \left( X^{(1)} \right) &= -\frac{x_2^2}{2 \cL_{F_R}^2 H(\bO)}
+ \cO \left( \| X^{(2)} \|^3 \right).
\label{eq:gsH1}
\end{align}
using also $H \left( X^{(2)} \right) = 0$ since $X^{(2)} \in \Sigma$.

We now compute $X^{(3)}$.
If this point is computed directly,
the required asymptotic calculations are relatively
lengthy as they include terms that are both linear and quadratic in $x_2$,
yet many of these terms do not
contribute to the linear term in \eqref{eq:gsFormula}.
So instead we first compute $X_{\rm vc} = \varphi^S_T \left( X^{(2)} \right)$,
where $\varphi^S_t(X)$ is the flow induced by $\dot{X} = F_S(X)$.
We then perform an adjustment to shift from $X_{\rm vc}$ to $X^{(3)}$.
Since we only compute the differences $X_{\rm vc} - X^{(1)}$ and $X^{(3)} - X_{\rm vc}$,
it is sufficient to retain only the first non-zero terms in each step of the asymptotics
and this greatly simplifies the calculations.
We call $X_{\rm vc}$ a {\em virtual counterpart},
as it may be virtual (i.e.~in the crossing region),
and mirrors the point $X^{(1)}$.

By \eqref{eq:gsFS}, the flow induced by $\dot{X} = F_S(X)$ has the Taylor expansion
\begin{equation}
\varphi^S_t(X) = X + F_R(\bO) t + \tfrac{1}{2} \rD F_S(\bO) F_R(\bO) t^2
+ \rD F_S(\bO) X t + \cO \left( \left( \| X \| + |t| \right)^3 \right),
\label{eq:gsFlowSliding}
\end{equation}
where $\rD F_S(\bO)$ is given by \eqref{eq:DFS}.
By substituting \eqref{eq:gsT} into \eqref{eq:gsFlow} and \eqref{eq:gsFlowSliding},
and subtracting the two expressions, we obtain
\begin{equation}
X_{\rm vc} = X^{(1)} - \frac{(F_R(\bO) - F_L(\bO)) x_2^2}
{2 \cL_{F_L} H(\bO) \cL_{F_R}^2 H(\bO)} + \cO \left( \| X^{(2)} \|^3 \right).
\label{eq:gsVC}
\end{equation}
Let $U$ denote the evolution time from $X_{\rm vc}$ to $X^{(3)}$,
so $X^{(3)} = \varphi^S_U \left( X_{\rm vc} \right)$.
Since $X^{(3)} \in \Omega$,
$\cL_{F_R} H \left( \varphi^S_U \left( X_{\rm vc} \right) \right) = 0$,
and thus by \eqref{eq:gsLieRFlow},
\begin{equation}
U = -\frac{x_{\rm vc}}{\cL_{F_R}^2 H(\bO)} + \cO \left( \| X_{\rm vc} \|^2 \right),
\nonumber
\end{equation}
where $x_{\rm vc} = \left( \nabla \cL_{F_R} H(\bO) \right)^{\sf T} X_{\rm vc}$.
Since $\cL_{F_R} H \left( X^{(1)} \right) = 0$, from \eqref{eq:gsVC} we obtain
\begin{equation}
U = -\frac{1}{2 \cL_{F_R}^2 H(\bO)} \left(
\frac{\cL_{F_L} \cL_{F_R} H(\bO)}{\cL_{F_L} H(\bO) \cL_{F_R}^2 H(\bO)} -
\frac{1}{\cL_{F_L} H(\bO)} \right) x_2^2
+ \cO \left( \| X^{(1)} \|^{\frac{3}{2}} \right),
\label{eq:gsU}
\end{equation}
where the error term is a consequence of the fact
that $\left\| X^{(2)} \right\| = \cO \left( \| X^{(1)} \|^{\frac{1}{2}} \right)$
for a given point $X^{(1)}$ near $\bO$.
Finally, $X^{(3)} = X_{\rm vc} + F_S(X_{\rm vc}) U + \cO \left( \left( \| X_{\rm vc} \| + |U| \right)^2 \right)$,
so by \eqref{eq:gsFS}, \eqref{eq:gsVC}, and \eqref{eq:gsU} we obtain
\begin{equation}
X^{(3)} = X^{(1)} - \frac{1}{2 \cL_{F_R}^2 H(\bO)}
\left( \frac{\cL_{F_L} \cL_{F_R} H(\bO)}{\cL_{F_L} H(\bO) \cL_{F_R}^2 H(\bO)} \,F_R(\bO)
- \frac{1}{\cL_{F_L} H(\bO)} \,F_L(\bO) \right) x_2^2 + \cO \left( \| X^{(1)} \|^{\frac{3}{2}} \right).
\nonumber
\end{equation}
By \eqref{eq:gsH1} this reduces to the desired formula \eqref{eq:gsFormula} with $X_{\rm fold} = \bO$.
\end{proof}

\section{The derivative of the discontinuity map}
\label{app:disc}

Here we derive the formula \eqref{eq:DQdisc}
for the derivative of $Q_{\rm disc}$
at the grazing point and in the limit $\nu \to 0$.
By \eqref{eq:tildeH2} and \eqref{eq:abc},
\begin{equation}
\tilde{H}(Y;0) = a Y_1 + b Y_2 + c Y_3 + \psi.
\label{eq:tildeH3}
\end{equation}
So by evaluating \eqref{eq:8p74} at the perturbed point \eqref{eq:perturbedPoint},
we obtain
\begin{equation}
Y' = \begin{bmatrix}
\frac{-a \psi}{a^2 + b^2} + \ee_1 \\
\frac{-b \psi}{a^2 + b^2} + \frac{b}{a} \,\ee_1 - \frac{c \gamma_0}{a \beta_0} \,\ee_3 \\
\ee_3
\end{bmatrix} + Z(Y;0) \left( \tfrac{a^2 + b^2}{a} \,\ee_1
+ \big( 1 - \tfrac{b \gamma_0}{a \beta_0} \big) \ee_3 \right)
+ \cO \left( \left( |\ee_1| + |\ee_3| \right)^{\frac{3}{2}} \right).
\label{eq:DQdiscProof10}
\end{equation}
The coefficients of the $\ee_1$ and $\ee_3$ terms in \eqref{eq:DQdiscProof10} are
\begin{equation}
\begin{bmatrix} \frac{\partial Y_1'}{\ee_1} & \frac{\partial Y_1'}{\ee_3} \\[1.2mm]
\frac{\partial Y_3'}{\ee_1} & \frac{\partial Y_3'}{\ee_3} \end{bmatrix}
= \begin{bmatrix}
1 + \frac{a^2 + b^2}{a} \,Z_1(G(0);0) &
\left( 1 - \frac{b \gamma_0}{a \beta_0} \right) Z_1(G(0);0) \\
\frac{a^2 + b^2}{a} \,Z_3(G(0);0) &
1 + \left( 1 - \frac{b \gamma_0}{a \beta_0} \right) Z_3(G(0);0) \end{bmatrix},
\label{eq:DQdiscProof20}
\end{equation}
and notice we have been able to set $\ee_1 = \ee_3 = 0$ inside $Z$.
To evaluate $Z(G(0);0)$, we introduce the scaled vector field $J_L(Y;\nu) = \nu \tilde{F}_L(Y;\nu)$,
because $\tilde{F}_L(Y;\nu)$ blows up in the limit $\nu \to 0$.
In terms of $J_L$ and $\tilde{F}_R$, \eqref{eq:8p74vector} takes the form
\begin{equation}
Z = \frac{\cL_{J_L} \cL_{\tilde{F}_R} \tilde{H}}
{\big( \cL_{\tilde{F}_R}^2 \tilde{H} \big) \big( \cL_{J_L} \tilde{H} \big)} \,\tilde{F}_R
- \frac{1}{\cL_{J_L} \tilde{H}} \,J_L \,.
\label{eq:8p74vectorB}
\end{equation}
By \eqref{eq:realJordanForm}, \eqref{eq:tildeFL2}, \eqref{eq:tildeFR2}, and \eqref{eq:pqr},
\begin{align}
J_L(Y;0) &= \begin{bmatrix} p \\ q \\ r \end{bmatrix}, &
\tilde{F}_R(Y;0) &= \begin{bmatrix} \beta_0 Y_2 \\ -\beta_0 Y_1 \\ \gamma_0 Y_3 \end{bmatrix},
\nonumber
\end{align}
thus
\begin{align}
\cL_{J_L} \tilde{H} &= a p + b q + c r, \nonumber \\
\cL_{\tilde{F}_R}^2 \tilde{H} &= -a \beta_0^2 Y_1 - b \beta_0^2 Y_2 + c \gamma_0^2 Y_3 \,, \nonumber \\
\cL_{J_L} \cL_{\tilde{F}_R} \tilde{H} &= -b p \beta_0 + a \beta_0 q + c r \gamma_0 \,, \nonumber
\end{align}
using also \eqref{eq:tildeH3}.
By substituting these into \eqref{eq:8p74vectorB}, we obtain
\begin{equation}
Z(G(0);0) = \frac{-1}{a^2 + b^2} \begin{bmatrix} a \\ b \\ 0 \end{bmatrix}
+ \frac{r}{\beta_0 \left( a^2 + b^2 \right)(a p + b q + c r)}
\begin{bmatrix}
(a \beta_0 - b \gamma_0) c \\
(b \beta_0 + a \gamma_0) c \\
-\left( a^2 + b^2 \right) \beta_0
\end{bmatrix},
\label{eq:8p74vectorC}
\end{equation}
using also the formula \eqref{eq:grazPoint2} for $G$.
By substituting the first and third components
of \eqref{eq:8p74vectorC} into \eqref{eq:DQdiscProof20},
we arrive at the desired formula
\begin{equation}
\begin{bmatrix} \frac{\partial Y_1'}{\ee_1} & \frac{\partial Y_1'}{\ee_3} \\[1.2mm]
\frac{\partial Y_3'}{\ee_1} & \frac{\partial Y_3'}{\ee_3} \end{bmatrix}
= \begin{bmatrix}
\frac{(a \beta_0 - b \gamma_0) c r}{a \beta_0 (a p + b q + c r)} &
\frac{-(a \beta_0 - b \gamma_0) c}{\beta_0 \left( a^2 + b^2 \right)(a p + b q + c r)}
\left( a p + b q + \frac{b c r \gamma_0}{a \beta_0} \right) \\[1.2mm]
\frac{-\left( a^2 + b^2 \right) r}{a (a p + b q + c r)} &
1 - \frac{(a \beta_0 - b \gamma_0) c r}{a \beta_0 (a p + b q + c r)}
\end{bmatrix}.
\nonumber
\end{equation}

{\footnotesize
\bibliographystyle{unsrt}
\bibliography{BEBHB3d_arXivBIB}

@book{DiBu08,
	author = {di Bernardo, M. and Budd, C.J. and Champneys, A.R.
		and Kowalczyk, P.},
	title = {Piecewise-smooth Dynamical Systems. Theory and Applications.},
	publisher = {Springer-Verlag},
	address = {New York},
	year = 2008,
}

@book{Fi88,
	author = {Filippov, A.F.},
	title = {Differential Equations with Discontinuous Righthand Sides.},
	publisher = {Kluwer Academic Publishers.},
	address = {Norwell},
	year = 1988,
}

@book{Je18b,
	author = {Jeffrey, M.R.},
	title = {Hidden Dynamics. The Mathematics of Switches, Decisions and Other
		Discontinuous Behaviour.},
	publisher = {Springer},
	address = {New York},
	year = 2018,
}

@article{JoRa99,
	author = {Johansson, K.H. and Rantzer, A. and {\AA}str\"{o}m, K.J.},
	title = {Fast switches in relay feedback systems.},
	journal = {Automatica},
	volume = 35,
	pages = {539-552},
	year = 1999,
}

@book{BlCz99,
	author = {Blazejczyk-Okolewska, B. and Czolczynski, K. and
		Kapitaniak, T. and Wojewoda, J.},
	title = {Chaotic Mechanics in Systems with Impacts and Friction},
	publisher = {World Scientific},
	address = {Singapore},
	year = 1999,
}

@article{FeGu98,
	author = {Feeny, B. and Guran, A. and Hinrichs, N. and Popp, K.},
	title = {A Historical Review of Dry Friction and Stick-Slip Phenomena.},
	journal = {Appl. Mech. Rev.},
	volume = 51,
	number = 5,
	pages = {321-341},
	year = 1998,
}

@article{DeGr07,
	author = {Dercole, F. and Gragnani, A. and Rinaldi, S.},
	title = {Bifurcation analysis of piecewise smooth ecological models.},
	journal = {Theor. Popul. Biol.},
	volume = 72,
	pages = {197-213},
	year = 2007,
}

@book{PuSu06,
	editor = {Puu, T. and Sushko, I.},
	title = {Business Cycle Dynamics: Models and Tools.},
	publisher = {Springer-Verlag},
	address = {New York},
	year = 2006,
}

@article{DeDe11,
	author = {Dercole, F. and Della Rossa, F. and Colombo, A. and
		Kuznetsov, Yu.A.},
	title = {Two Degenerate Boundary Equilibrium Bifurcations in Planar
		{F}ilippov Systems.},
	journal = {SIAM J. Appl. Dyn. Syst.},
	volume = 10,
	number = 4,
	pages = {1525-1553},
	year = 2011,
}

@article{DiPa08,
	author = {di Bernardo, M. and Pagano, D.J. and Ponce, E.},
	title = {Nonhyperbolic boundary equilibrium bifurcations in
		planar {F}ilippov systems: {A} case study approach.},
	journal = {Int. J. Bifurcation Chaos},
	volume = 18,
	number = 5,
	pages = {1377-1392},
	year = 2008,
}

@article{GuSe11,
	author = {Guardia, M. and Seara, T.M. and Teixeira, M.A.},
	title = {Generic bifurcations of low codimension of planar
		{F}ilippov Systems.},
	journal = {J. Diff. Eq},
	volume = 250,
	pages = {1967-2023},
	year = 2011,
}

@book{Si10,
	author = {Simpson, D.J.W.},
	title = {Bifurcations in Piecewise-Smooth Continuous Systems.},
	series = {Nonlinear Science},
	volume = 70,
  publisher = {World Scientific},
  address = {Singapore},
	year = 2010,
}

@article{SiKo09,
	author = {Simpson, D.J.W. and Kompala, D.S. and Meiss, J.D.},
	title = {Discontinuity Induced Bifurcations in a Model
		of ${S}$\hspace{-.3mm}{\em accharomyces cerevisiae}.},
	journal = {Math. Biosci.},
	volume = 218,
	number = 1,
	pages = {40-49},
	year = 2009,
}

@article{SiMe08,
	author = {Simpson, D.J.W. and Meiss, J.D.},
	title = {Unfolding a Codimension-Two Discontinuous
		{A}ndronov-{H}opf Bifurcation.},
	journal = {Chaos},
	volume = 18,
	number = 3,
	pages = {033125},
	year = 2008,
}

@article{DiBu01,
	author = {di Bernardo, M. and Budd, C.J. and Champneys, A.R.},
	title = {Normal form maps for grazing bifurcations in $n$-dimensional
		piecewise-smooth dynamical systems.},
	journal = {Phys. D},
	volume = 160,
	pages = {222-254},
	year = 2001,
}

@article{JeCh10,
	author = {Jeffrey, M.R. and Champneys, A.R. and di Bernardo, M. and
		Shaw, S.W.},
	title = {Catastrophic sliding bifurcations and onset of oscillations in a
		superconducting resonator.},
	journal = {Phys. Rev. E},
	volume = 81,
	pages = {016213},
	year = 2010,
}

@article{JeHo11,
	author = {Jeffrey, M.R. and Hogan, S.J.},
	title = {The Geometry of Generic Sliding Bifurcations.},
	journal = {SIAM Rev.},
	volume = 53,
	number = 3,
	pages = {505-525},
	year = 2011,
}

@article{NuYo92,
	author = {Nusse, H.E. and Yorke, J.A.},
	title = {Border-Collision Bifurcations Including
		``Period Two to Period Three'' for Piecewise Smooth Systems.},
	journal = {Phys. D},
	volume = 57,
	pages = {39-57},
	year = 1992,
}

@article{AvSc12,
	author = {Avrutin, V. and Schanz, M. and Banerjee, S.},
	title = {Occurrence of multiple attractor bifurcations in the
		two-dimensional piecewise linear normal form map.},
	journal = {Nonlin. Dyn.},
	volume = 67,
	pages = {293-307},
	year = 2012,
}

@article{BaGr99,
	author = {Banerjee, S. and Grebogi, C.},
	title = {Border collision bifurcations in two-dimensional piecewise
		smooth maps.},
	journal = {Phys. Rev. E},
	volume = 59,
	number = 4,
	pages = {4052-4061},
	year = 1999,
}

@article{FaSi23,
	author = {Fatoyinbo, H.O. and Simpson, D.J.W.},
	title = {A synopsis of the non-invertible, two-dimensional, border-collision
		normal form with applications to power converters.},
	journal = {Int. J. Bifurcation Chaos},
	volume = 33,
	number = 8,
	pages = {2330019},
	year = 2023,
}

@article{GhMc24,
	author = {Ghosh, I. and McLachlan, R.I. and Simpson, D.J.W.},
	title = {The Bifurcation Structure within Robust Chaos for Two-Dimensional
		Piecewise-Linear Maps.},
	journal = {Commun. Nonlin. Sci. Numer. Simul.},
	volume = 134,
	pages = {108025},
	year = 2024,
}

@article{Gl16e,
	author = {Glendinning, P.},
	title = {Bifurcation from stable fixed point to {2D} attractor
		in the border collision normal form.},
	journal = {IMA J. Appl. Math.},
	volume = 81,
	number = 4,
	pages = {699-710},
	year = 2016,
}

@misc{SuGa06,
	author = {Sushko, I. and Gardini, L.},
	title = {Center Bifurcation for a Two-Dimensional Piecewise
		Linear Map.},
	note = {In \cite{PuSu06}, pages 49-78},
}

@article{ZhMo08,
	author = {Zhusubaliyev, Z.T. and Mosekilde, E. and De, S. and
		Banerjee, S.},
	title = {Transitions from phase-locked dynamics to chaos in a
		piecewise-linear map.},
	journal = {Phys. Rev. E},
	volume = 77,
	pages = {026206},
	year = 2008,
}

@article{HaPo91,
	author = {Hastings, A. and Powell, T.},
	title = {Chaos in a three-species food chain.},
	journal = {Ecology},
	volume = 72,
	number = 3,
	pages = {896-903},
	year = 1991,
}

@book{Hi76,
	author = {Hirsch, M.W.},
	title = {Differential Topology.},
	publisher = {Springer-Verlag},
	address = {New York},
	year = 1976,
}

@article{DiNo08,
	author = {di Bernardo, M. and Nordmark, A. and Olivar, G.},
	title = {Discontinuity-induced bifurcations of equilibria in
		piecewise-smooth and impacting dynamical systems.},
	journal = {Phys. D},
	volume = 237,
	pages = {119-136},
	year = 2008,
}

@book{Gl99,
	author = {Glendinning, P.},
	title = {Stability, Instability and Chaos:
		An Introduction to the Theory of Nonlinear Differential
		Equations.},
	publisher = {Cambridge University Press},
	address = {New York},
	year = 1999,
}

@book{Ku04,
	author = {Kuznetsov, Yu.A.},
	title = {Elements of Bifurcation Theory.},
	publisher = {Springer-Verlag},
	address = {New York},
	volume = 112,
	series = {Appl. Math. Sci.},
	edition = {3rd},
	year = 2004,
}

@book{Me07,
	author = {Meiss, J.D.},
	title = {Differential Dynamical Systems.},
	publisher = {SIAM},
	address = {Philadelphia},
	year = 2007,
}

@article{DiKo02,
	author = {di Bernardo, M. and Kowalczyk, P. and Nordmark, A.},
	title = {Bifurcations of dynamical systems with sliding:
		Derivation of normal-form mappings.},
	journal = {Phys. D},
	volume = 170,
	pages = {175-205},
	year = 2002,
}

@article{Si16,
	author = {Simpson, D.J.W.},
	title = {Border-collision bifurcations in $\mathbb{R}^N$.},
	journal = {SIAM Rev.},
	volume = 58,
	number = 2,
	pages = {177-226},
	year = 2016,
}

@article{Si18d,
	author = {Simpson, D.J.W.},
	title = {A general framework for boundary equilibrium bifurcations of {F}ilippov systems.},
	journal = {Chaos},
	volume = 28,
	number = 10,
	pages = {103114},
	year = 2018,
}

@article{GlSi21,
	author = {Glendinning, P.A. and Simpson, D.J.W.},
	title = {A constructive approach to robust chaos using invariant manifolds and expanding cones.},
	journal = {Discrete Contin. Dyn. Syst.},
	volume = 41,
	number = 7,
	pages = {3367-3387},
	year = 2021,
}

@article{Ko05,
	author = {Kowalczyk, P.},
	title = {Robust chaos and border-collision bifurcations 
		in non-invertible piecewise-linear maps.},
	journal = {Nonlinearity},
	volume = 18,
	pages = {485-504},
	year = 2005,
}

@inproceedings{Mi80,
	author = {Misiurewicz, M.},
	title = {Strange attractors for the {L}ozi mappings.},
	editor = {Helleman, R.G.},
	booktitle = {Nonlinear dynamics, Annals of the New York
		Academy of Sciences},
	publisher = {Wiley},
	address = {New York},
	pages = {348-358},
	year = 1980,
}

@article{GlSi20b,
	author = {Glendinning, P.A. and Simpson, D.J.W.},
	title = {Robust chaos and the continuity of attractors.},
	journal = {Trans. Math. Appl.},
	volume = 4,
	number = 1,
	pages = {tnaa002},
	year = 2020,
}

@article{GhSi22,
	author = {Ghosh, I. and Simpson, D.J.W.},
	title = {Renormalisation of the Two-Dimensional Border-Collision
			Normal Form.},
	journal = {Int. J. Bifurcation Chaos},
	volume = 32,
	number = 12,
	pages = {2250181},
	year = 2022,
}

@article{Va26,
	author = {van der Pol, B.},
	title = {On relaxation-oscillations.},
	journal = {Phil. Mag.},
	volume = 2,
	pages = {978-992},
	year = 1926,
}

@article{Si22b,
	author = {Simpson, D.J.W.},
	title = {Twenty {H}opf-like bifurcations in piecewise-smooth dynamical systems.},
	journal = {Phys. Rep.},
	volume = 970,
	pages = {1-80},
	year = 2022,
}

@article{AbRo94,
	author = {Abrams, P.A. and Roth, J.D.},
	title = {The effects of enrichment of three-species food chains with nonlinear
		functional responses.},
	journal = {Ecology},
	volume = 75,
	number = 4,
	pages = {1118-1130},
	year = 1994,
}

@article{KuRi96,
	author = {Kuznetsov, Yu.A. and Rinaldi, S.},
	title = {Remarks on Food Chain Dynamics.},
	journal = {Math. Biosci.},
	volume = 134,
	pages = {1-33},
	year = 1996,
}

@article{McYo95,
	author = {McCann, K. and Yodzis, P.},
	title = {Bifurcation Structure of a Three-Species Food Chain Model.},
	journal = {Theor. Popul. Biol.},
	volume = 48,
	pages = {93-125},
	year = 1995,
}

@article{Ko98,
	author = {Kogan, M.},
	title = {Integrated Pest Management: {H}istorical Perspectives and
		Contemporary Developments.},
	journal = {Annu. Rev. Entomol.},
	volume = 43,
	pages = {243-270},
	year = 1998,
}

@article{Jo04,
	author = {Jones, R.A.C.},
	title = {Using epidemiological information to develop effective integrated
		virus disease management strategies.},
	journal = {Virus Res.},
	volume = 100,
	pages = {5-30},
	year = 2004,
}

@article{ZhTa22,
	author = {Zhou, H. and Tang, S.},
	title = {Bifurcation dynamics on the sliding vector field of a {F}ilippov
		ecological system.},
	journal = {Appl. Math. Comput.},
	volume = 424,
	pages = {127052},
	year = 2022,
}

@article{HaAr21,
	author = {Hamdallah, S.A.A. and Arafa, A.A. and Tang, S. and Xu, Y.},
	title = {Complex Dynamics of a {F}ilippov Three-Species Food Chain Model.},
	journal = {Int. J. Bifurcation Chaos},
	volume = 31,
	number = 5,
	pages = {2150074},
	year = 2021,
}

@article{NoKo06,
	author = {Nordmark, A.B. and Kowalczyk, P.},
	title = {A codimension-two scenario of sliding solution in
		grazing-sliding bifurcations.},
	journal = {Nonlinearity},
	volume = 19,
	number = 1,
	pages = {1-26},
	year = 2006,
}

@book{GlJe19,
	author = {Glendinning, P. and Jeffrey, M.R.},
	title = {An Introduction to Piecewise Smooth Dynamics.},
	publisher = {Birkhauser},
	address = {Boston},
	year = 2019,
}

@book{Ru17,
	author = {Ruette, S.},
	title = {Chaos on the interval.},
	publisher = {American Mathematical Society},
	address = {Providence, RI},
	year = 2017,
}
}

\end{document}